\newcommand{\EQ}{\begin{eqnarray}}
\newcommand{\EN}{\end{eqnarray}}
\newcommand{\EQQ}{\begin{eqnarray*}}
\newcommand{\ENN}{\end{eqnarray*}}
\newcommand{\nnum}{\nonumber}
\title{On the Optimal Control of  Impulsive Hybrid Systems  On  Riemannian Manifolds\thanks{This work was supported by  NSERC and AFOSR.}}
\author{Farzin Taringoo$^\dagger$\hspace{-.5cm}\and Peter E. Caines\thanks{Department of Electrical and Computer Engineering and Centre for Intelligent
Machines, McGill University, Montreal, Canada, \{taringoo, peterc@cim.mcgill.ca\}.}}
\begin{document}

\maketitle

\begin{abstract}
This  paper provides a geometrical derivation of the Hybrid Minimum Principle (HMP) for autonomous impulsive hybrid systems on Riemannian manifolds,  i.e. systems where the manifold valued
component of the hybrid state trajectory may have a jump discontinuity 
when
the discrete component changes value. The analysis  is expressed in terms of extremal trajectories on the cotangent bundle of the manifold state space. In the case of autonomous hybrid systems, switching manifolds are defined as smooth embedded submanifolds of the  state manifold and the jump function is defined as a smooth map on the switching manifold. The HMP results are obtained in the case of time invariant switching manifolds and state jumps on Riemannian manifolds.

\end{abstract}

\begin{keywords} 
Hybrid Minimum Principle, Riemannian Manifolds. 
\end{keywords}

\begin{AMS}
34A38, 49N25, 34K34, 49K30, 93B27   
\end{AMS}

\pagestyle{myheadings}
\thispagestyle{plain}
\markboth{Farzin Taringoo, Peter Caines}{On the Optimal Control of  Impulsive Hybrid Systems  On  Riemannian Manifolds }

\section{Introduction}
The problem of hybrid systems optimal control (HSOC) in Euclidean spaces has been studied
 in many papers, see e.g. \cite{Bengea,  Branicky,  vint2, vint1, sheet, Dmitruk, Piccoli, Gram, Boris,  Reidinger, Shaikh,Sussmann,Tomlin, Xu}. In particular, \cite{ Azhmyakov, Piccoli, Shaikh, Sussmann} present an extension of the Minimum Principle to hybrid systems
and \cite{Shaikh} gives an iterative algorithm which is based upon the Hybrid Minimum Principle (HMP) necessary conditions for both autonomous and controlled switching systems. In general the previously cited papers consider HSOC problems with a priori given sequences of discrete transitions. In addition, \cite{Piccoli} includes the case of switching costs. 

We note that historically optimal control theory has mainly used
the term Maximum Principle since optimal controls were derived via the
maximization of a Hamiltonian function, see e.g. \cite{Pontryagin}. However,
since we work with problems in the Bolza form we formulate the theory in terms of the minimization of a suitably defined Hamiltonian
function and consequently shall consistently use  the term Minimum 
Principle.

A geometric version of Pontryagin's Minimum Principle for a general class of state  manifolds is given in \cite{ Agra, Barbero, Sussmann}.
In this paper, we employ the control needle variation method  of  \cite{Barbero}, \cite{Bari} and \cite{ Taringoo7} to analyze  state variation propagation through switching manifolds  and hence we obtain a Hybrid Minimum Principle for autonomous hybrid systems (i.e. systems without controlled distinct state switchings) on Riemannian manifolds. It is shown that under appropriate hypotheses on the differentiability of the hybrid value function, the discontinuity of the adjoint variable at the optimal switching state and switching time is proportional to a differential form of the hybrid value function defined on the cotangent bundle of the state manifold. In the case of open control sets and Euclidean state spaces this result for impulsive hybrid systems  appeared in \cite{Reidinger2} without using the language of differential geometry.  We note that the analysis in this paper extends to the case of multiple autonomous switchings which has been treated in \cite{Shaikh} for hybrid systems defined on Euclidean spaces. 

The  continuity of the Hamiltonian function in the case of time invariant switching manifolds is derived in \cite{Shaikh} for open control value sets by employing the methods of the calculus of variations.  In this paper, for compact control value sets, we obtain  the continuity result  for the Hamiltonian function at the optimal switching time  by use of the  needle variation method. In particular we note that here the needle variation method  is generalized to a class of autonomous hybrid systems associated with  time varying embedded switching manifolds when the Hamiltonian function is discontinuous at optimal switching times. It is shown that the discontinuity is related to a differential form of an augmented hybrid value function. 

In this paper, in Section \ref{sec1} we give a general definition of hybrid systems on differentiable manifolds and then in Section \ref{sec2} present a geometric version of the Pontryagin Minimum Principle for optimal control systems. In Section \ref{sec3} we obtain the Hybrid Minimum Principle for impulsive hybrid systems using the method of needle variations. Complete proofs of the results of Section \ref{sec2} are given in the Appendices \ref{A0}-\ref{s1}. Furthermore the analysis of those cases where the hybrid value functions are differentiable, and  the switching manifolds and impulsive jumps are time varying, are given in the referenced  link \cite{link}.
\section{Hybrid Systems}
\label{sec1}
  In the following definition the standard hybrid systems framework (see e.g. \cite{Branicky,Shaikh}) is generalized to  the case where the continuous state space is a smooth manifold, where henceforth in this paper smooth means $C^{\infty}$. 
\begin{definition}
\label{d1}
A hybrid system with autonomous discrete transitions is a five-tuple 
\EQ \bf{H}:= \{\textsl{H}=\textsl{Q}\times \mathcal{M}, \textsl{U},\textsl{F},\mathcal{S},\mathcal{J}\}\EN
where:\\
$ Q=\{1,2,3,...,|Q|\}$
is a finite set of \textit{discrete (valued) states (components)} and $\mathcal{M}$ is a smooth $n$ dimensional  Riemannian continuous (valued) state (component) manifold with associated metric $g_{\mathcal{M}}$.
\\$H$  is called the \textit{hybrid state space} of $\bf{H}$.\\
$U\subset \mathds{R}^{u}$ is a set of \textit{admissible input control values}, where  $U$ is a compact set in $\mathds{R}^u$. The set of \textit{admissible input control functions} is $\mathcal{I}:=(L_{\infty}[t_{0},t_{f}),U)$, the set of all bounded measurable functions on some interval $[t_{0},t_{f}), t_{f}< \infty$, taking values in $U$.\\
$F$ is an \textit{indexed collection of smooth, i.e. $C^{\infty}$, vector fields} $\{f_{q_{i}}\}_{q_{i}\in Q}$, where $f_{q_{i}}:\mathcal{M}\times U\rightarrow T\mathcal{M}$ is a controlled vector field assigned to each discrete state; hence each $f_{q_{i}}$ is continuous on $\mathcal{M}\times U$ and continuously differentiable on $\mathcal{M}$ for all $u\in U$.\\
$\mathcal{S}:=\{n^{k}_{\gamma}: \gamma\in Q\times Q, 1\le k \le K<\infty,  n^{k}_{\gamma}\subset \mathcal{M}\}$ is a collection of  embedded time independent pairwise disjoint switching manifolds $($except in the case 
 where $\gamma = (p,q)$ is identified with $\gamma^{'} = (q,p)$$)$ such that for any ordered pair $\gamma=(p,q), n^{k}_{\gamma}$ is an open smooth, oriented codimension 1 submanifold of $\mathcal{M}$, possibly with boundary $\partial {n}^{k}_{\gamma}$. By abuse of notation, we describe the manifolds  locally by $n^{k}_{\gamma}=\{x: n^{k}_{\gamma}(x)=0, x\in \mathds{R}^{n}\}$.\\ 
$\mathcal{J}$  shall denote the family of the \textit{state jump functions} on the manifold $\mathcal{M}$. For an autonomous switching event from $p\in Q$ to $q\in Q$, the corresponding jump function is given by a smooth map $\zeta_{p,q}:\mathcal{M}\rightarrow \mathcal{M}$: if $x(t^{-})\in \mathcal{S}$ the state trajectory jumps to $x(t)=\zeta_{p,q}(x(t^{-}))\in \mathcal{M}$, $\zeta_{p,q}\in \mathcal{J}$. The non-jump special case is given by $x(t)=x(t^{-})$. \\
 We use the term \textit{impulsive hybrid systems} for those hybrid systems where the continuous part of the  state trajectory may have discontinuous transitions (i.e. jump) at controlled or autonomous discrete state switching times.
\end{definition}\\
We assume:\\
\textbf{\textit{A1}}: The \textit{initial state} $h_{0}:=(x(t_{0}),q_{0})\in H$ is such that $x_{0}=x(t_{0})\notin \mathcal{S}$ for all $q_{i}\in Q$. A \textit{(hybrid) input function }$u$ is defined on a half open interval
$[t_0, t_{f}), t_{f} \leq \infty$, where further $u\in \mathcal{I}$.
 A \textit{(hybrid) state trajectory} with initial state $h_{0}$ and (hybrid) input function $u$ is a triple $(\tau, q, x)$
consisting of a finite strictly increasing sequence of times (boundary and switching times)
$\tau = (t_0, t_1, t_2,\dots)$, an associated sequence of discrete states $q = (q_0,q_1, q_2, \dots)$, and a sequence $x(\cdot) = (x_{q_0}(\cdot), x_{q_1}(\cdot), x_{q_2}(\cdot), \dots)$ of absolutely continuous
functions $x_{q_i}: [t_i, t_{i+1}) \rightarrow \mathcal{M}$ satisfying the  continuous and discrete dynamics given by the following definition.
\begin{definition}
\label{d0}
The continuous dynamics of a hybrid system $\bf{H}$ with initial condition $h_{0}=(x_{0}, q_{0})$, input control function $u\in \mathcal{I}$ and hybrid state trajectory $(\tau, q, x)$ are specified piecewise in time via the mappings
\EQ\label{state}(x_{q_{i}},u):[t_i, t_{i+1})\rightarrow \mathcal{M}\times U,\hspace{.5cm} i=0,...,L,\quad 0<L<\infty, \EN
where $x_{q_{i}}(.)$ is an integral curve of  $ \label{state3}f_{q_{i}}(.,u(.)): \mathcal{M}\times[t_i, t_{i+1})\rightarrow T\mathcal{M}$ satisfying 
\EQ\label{state2} \dot{x}_{q_{i}}(t) = f_{q_{i}}(x_{q_{i}}(t), u(t)),
\quad a.e.  \: t \in [t_i, t_{i+1}),\nnum\EN
where $x_{q_{i+1}}(t_{i+1})$ is given recursively by 
\EQ \label{thm1}x_{q_{i+1}}(t_{i+1})=lim_{t\uparrow t^{-}_{i+1}}\zeta_{q_{i},q_{i+1}}(x_{q_{i}}(t)), \quad h_{0}=(q_{0},x_{0}),  t<t_{f}.\EN

The discrete autonomous switching dynamics are defined as follows:\\
For all $p,q$,  whenever an admissible hybrid system trajectory governed by the controlled vector field $f_{p}$ meets any given  switching manifold $n_{p,q}$ transversally, i.e. $f_{p}(x(t^{-}_{s}),t^{-}_{s}) \notin T_{x(t^{-}_{s})}\mathcal{S}$, there is an autonomous switching to the controlled vector field $f_{q}$, equivalently, discrete state transition $p\rightarrow q,\hspace{.2cm} p,q\in Q$.   Conversely, any autonomous discrete state transition corresponds to a transversal intersection. 

A system trajectory is not continued after a non-transversal intersection with
a switching manifold. Given the definitions and assumptions above, standard arguments  give the existence and uniqueness of a hybrid state trajectory $(\tau, q, x)$, with initial state $h_{0}\in H$ and input function $u\in \mathcal{I}$, up to $T,$ defined to be the least of an explosion time or an instant of non-transversal intersection with a switching manifold. \hspace{2cm} \\
\end{definition}

We adopt:\\
 
\textbf{\textit{A2}}:~~(Controllability) For any $ q\in Q$, all pairs of states $(x_{1}, x_{2})$ are mutually accessible in any given time period $[t_{0},t], t_{0}<t<t_{f}$, via the controlled vector field $\dot{x}_{q}(t) = f_{q}(x_{q}(t), u(t)), \hspace{.1cm}$ for some $u\in \mathcal{I}=(L_{\infty}[t_{0},t_{f}),U)$.\\ 

\textbf{\textit{A3}}:~~ $\{l_{q_{i}}\}_{q_{i}\in Q}$, is a family of \textit{loss functions} such that $l_{q_{i}}\in C^{k}(\mathcal{M}\times \textit{U};\mathds{R}^{+}),k\geq1$, and $h$ is a \textit{terminal cost function} such that  $h\in C^{k}(\mathcal{M};\mathds{R}^{+}),k\geq1$.\\ 

Henceforth, Hypotheses \textbf{\textit{A1}}-\textbf{\textit{A3}} will be in force unless otherwise stated. 
Let $L$ be the number of switchings and $ u\in \mathcal{I}$ then we define the \textit{hybrid cost function} as
\EQ \label{cost}\hspace{.4cm}J(t_{0},t_{f},h_{0};L,u):=\sum^{L}_{i=0}\int^{t_{i+1}}_{t_{i}}l_{q_{i}}(x_{q_{i}}(s),u(s))ds+h(x_{q_{L}}(t_{f})),\nnum\\ t_{L+1}=t_{f}<T, u\in \mathcal{I},\EN
where we observe the conditions above yield $J(t_{0},t_{f},h_{0};L,u)<\infty$.
\begin{definition}
\label{d2}
 For a hybrid system $\bf{H}$, given the data $(t_{0},t_{f},h_{0};L)$, the \textit{Bolza Hybrid Optimal Control Problem} (BHOCP) is defined as the infimization  of the hybrid cost function $J(t_{0},t_{f},h_{0};L, u)$ over  the hybrid input functions $u\in \mathcal{I}$, i.e. 
 \EQ\label{cost3} J^{o}(t_{0},t_{f},h_{0};L)=inf_{u\in\mathcal{I}}J(t_{0},t_{f},h_{0};L,u).\nnum\EN 
 \end{definition}
 \begin{definition}
 A \textit{Mayer Hybrid  Optimal Control Problem} (MHOCP) is defined as the special case of the BHOCP  where the cost function given in (\ref{cost}) is evaluated only on the terminal state of the system, i.e. $l_{q_{i}}=0,\hspace{.2cm}i=1,...,L$.
 \end{definition}\\

In general, different control inputs result in different sequences of discrete states of different cardinality. However,  in this paper,  we shall restrict the infimization to be over the  class of control functions, generically denoted $\mathcal{U}\subset\mathcal{I}$,  which generates an  a priori given sequence of discrete  transition events.\\\\
We adopt the following standard notation and terminology, see \cite{Lewis}.
The time dependent flow associated to a differentiable time independent vector field $f_{q_{i}}$ is a map $\Phi_{f^{u}_{q_{i}}}$ satisfying (where for economy of notation $f^{u}_{q_{i}}(.):=f_{q_{i}}(.,u(t))$):
\EQ  \Phi_{f^{u}_{q_{i}}}:[t_i, t_{i+1})\times [t_i, t_{i+1})\times \mathcal{M}\rightarrow \mathcal{M}, \quad (t,s,x)\rightarrow \Phi^{(t,s)}_{f^{u}_{q_{i}}}(x):= \Phi_{f^{u}_{q_{i}}}((t,s),x)\in \mathcal{M},\nnum\EN
where
 \EQ\label{1}\Phi^{(t,s)}_{f^{u}_{q_{i}}}:\mathcal{M}\rightarrow \mathcal{M},\quad \Phi^{(s,s)}_{f^{u}_{q_{i}}}(x)=x,\EN
\EQ  \frac{d}{dt}\Phi^{(t,s)}_{f^{u}_{q_{i}}}(x)|_{t}=f_{q_{i}}\big(\Phi^{(t,s)}_{f^{u}_{q_{i}}}(x(s))\big), \hspace{.2cm}t,s\in[t_{i},t_{i+1}).\EN

We associate $T \Phi_{ f^{u}_{q_{i}}}^{(t,s)}(.)$ to  $\Phi^{(t,s)}_{f^{u}_{q_{i}}}:\mathcal{M}\rightarrow \mathcal{M}$ via the push-forward of $\Phi^{(t,s)}_{f^{u}_{q_{i}}}$. 
\EQ   T\Phi_{ f^{u}_{q_{i}}}^{(t,s)}:T_{x}\mathcal{M}\rightarrow T_{\Phi_{ f^{u}_{q_{i}}}^{(t,s)}(x)}\mathcal{M}.\EN
Following \cite{Lewis},  the corresponding \textit{tangent lift} of $f^{u}_{q_{i}}(.)$ is the time dependent vector field $f^{T,u}_{q_{i}}(.)\in TT\mathcal{M}$ on $T\mathcal{M}$

\EQ \label{10}f^{T,u}_{q_{i}}(v_{x}):=\frac{d}{dt}|_{t=s} T \Phi_{ f^{u}_{q_{i}}}^{(t,s)}(v_{x}),\quad v_{x}\in T_{x}\mathcal{M}, \EN
which is given locally as 
\EQ \label{2} f^{T,u}_{q_{i}}(x,v_{x})= \left [f^{u,i}_{q_{i}}(x)\frac{\partial}{\partial x^{i}}+(\frac{\partial f^{u,i}_{q_{i}}}{\partial x^{j}}v^{j})\frac{\partial}{\partial v^{i}}\right ]^{n}_{i,j=1},\EN
and $T \Phi_{ f^{u}_{q_{i}}}^{(t,s)}(.)$ is evaluated on $v_{x}\in T_{x}\mathcal{M}$, see \cite{Lewis}. 
The following lemma gives the relation between the push-forward of $\Phi^{(t,s)}_{f_{q_{i}}}$ and the tangent lift introduced in (\ref{2}).
 For simplicity and uniformity of notation, we use $f_{q_{i}}$ instead of $f^{u}_{q_{i}}$.
 The following lemma is taken from \cite{Barbero} and its results are essential to obtain the Minimum Principle along the optimal trajectory for standard optimal control problems. In this paper we use the same results to obtain the HMP statement for hybrid systems.
\begin{lemma}[\cite{Barbero}]
\label{l1}
Consider $f_{q_{i}}(.,u(.)):\mathcal{M}\times I\rightarrow T\mathcal{M}, I=[t_{i},t_{i+1})$ as a time dependent vector field on $\mathcal{M}$ and $\Phi^{(t,s)}_{f_{q_{i}}}$ as its corresponding flow. The flow of $f^{T,u}_{q_{i}}$, denoted by $ \Psi:I\times I\times T\mathcal{M}\rightarrow T\mathcal{M}$, satisfies:
 \EQ \Psi(t,s,(x,v))=(\Phi^{(t,s)}_{f_{q_{i}}}(x),T\Phi^{(t,s)}_{f_{q_{i}}}(v))\in T\mathcal{M},\hspace{.5cm} (x,v)\in T\mathcal{M}.\nnum\EN
 \end{lemma}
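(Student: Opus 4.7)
The plan is to verify that the curve $t \mapsto (\Phi^{(t,s)}_{f_{q_i}}(x), T\Phi^{(t,s)}_{f_{q_i}}(v))$ is the unique integral curve of the tangent lift $f^{T,u}_{q_i}$ on $T\mathcal{M}$ through $(x,v)$ at $t=s$. Once that is established, uniqueness of integral curves for the smooth vector field $f^{T,u}_{q_i}$ on the interval $I$ forces $\Psi(t,s,(x,v))$ to agree with the claimed expression.

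First I would verify the initial condition at $t=s$. By (\ref{1}), $\Phi^{(s,s)}_{f_{q_i}}$ is the identity on $\mathcal{M}$, so its push-forward $T\Phi^{(s,s)}_{f_{q_i}}$ is the identity on $T\mathcal{M}$, and the candidate curve passes through $(x,v)$.

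Next I would differentiate both components at an arbitrary $t \in I$ and match them against the tangent lift. The base component is immediate from the flow equation for $\Phi^{(t,s)}_{f_{q_i}}(x)$, producing $f_{q_i}(\Phi^{(t,s)}_{f_{q_i}}(x))$, which is precisely the horizontal part of (\ref{2}). For the fiber component I would invoke the cocycle identity $\Phi^{(\tau,s)}_{f_{q_i}} = \Phi^{(\tau,t)}_{f_{q_i}} \circ \Phi^{(t,s)}_{f_{q_i}}$ for $\tau$ near $t$, apply the tangent functor (chain rule) to obtain $T\Phi^{(\tau,s)}_{f_{q_i}}(v) = T\Phi^{(\tau,t)}_{f_{q_i}}(T\Phi^{(t,s)}_{f_{q_i}}(v))$, and differentiate at $\tau = t$. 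By the defining equation (\ref{10}) applied at the base point $\Phi^{(t,s)}_{f_{q_i}}(x)$ to the tangent vector $T\Phi^{(t,s)}_{f_{q_i}}(v)$, this derivative equals $f^{T,u}_{q_i}(T\Phi^{(t,s)}_{f_{q_i}}(v))$, which is exactly the vertical component of the tangent lift demanded by (\ref{2}).

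As a coordinate-level sanity check I would set $J^i_j(t) := \partial \Phi^{(t,s),i}_{f_{q_i}}/\partial x^j$ and differentiate (\ref{1})--(6) with respect to the initial condition, obtaining the variational equation $\dot J^i_j = (\partial f^i_{q_i}/\partial x^k)(\Phi^{(t,s)}_{f_{q_i}}(x))\, J^k_j$. Consequently $V^i(t) := J^i_j(t)\,v^j$ satisfies $\dot V^i = (\partial f^i_{q_i}/\partial x^k)\, V^k$, matching the fiber component of (\ref{2}) component-wise and confirming the intrinsic calculation.

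The main obstacle is really bookkeeping rather than any deep geometric point: one must keep the two time arguments of $\Phi^{(\cdot,\cdot)}_{f_{q_i}}$ straight and apply (\ref{10}) with the correct base point and tangent vector after using the cocycle/chain rule. With that in hand, uniqueness of integral curves for the smooth vector field $f^{T,u}_{q_i}$ closes the argument.
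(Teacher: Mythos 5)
Your argument is correct. Note, however, that the paper does not prove Lemma \ref{l1} at all: it is imported verbatim from \cite{Barbero} ("The following lemma is taken from \cite{Barbero}..."), so there is no in-paper proof to compare against. What you supply is the standard argument one would find in the cited reference: verify the initial condition via $\Phi^{(s,s)}_{f_{q_i}}=\mathrm{id}$, differentiate the base component using the flow equation, obtain the fiber component by combining the cocycle identity with the defining relation (\ref{10}) applied at the base point $\Phi^{(t,s)}_{f_{q_i}}(x)$, and invoke uniqueness of integral curves; the coordinate computation with the variational equation $\dot J^i_j=(\partial f^i_{q_i}/\partial x^k)J^k_j$ correctly reproduces the local expression (\ref{2}). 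The only refinement worth recording is that for $u\in L_\infty$ the vector field is merely measurable in $t$, so the flow is absolutely continuous, the differentiations hold for a.e.\ $t$, and both the integral-curve characterization and its uniqueness must be understood in the Carath\'eodory sense; this does not affect the validity of your argument.
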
\hspace{8cm}
 
 \section{ The Pontryagin Minimum Principle for standard optimal control problems}
 \label{sec2}

In this section we focus on the \textit{Pontryagin Minimum Principle} (PMP) for standard (non-hybrid) optimal control problems defined on a Riemannian manifold $\mathcal{M}$. A standard optimal control problem (OCP) can be obtained from a BHOCP, see (\ref{cost}),  by fixing the discrete states  $q_{i}$ to $q$, and hence $L$ to the value 0. The resulting optimal control problem in Bolza form becomes that of the infimization of the cost (\ref{cost}) with respect to state dynamics which by suppressing notation of $q$ may be written $\dot{x}=f(x(t),u(t)),\hspace{.1cm} x(t)\in \mathcal{M}, u(t)\in \mathcal{U}, t\in[t_{0},t_{f}].$

 \subsection{The Relationship between Bolza and Mayer Problems}
 In Section 2 both the BHOCP and the MHOCP were introduced; since the results in this paper are only stated for the Mayer problem we now briefly explain the relationship between them.

In general (see \cite{Barbero}),  a Bolza problem can be converted to a Mayer problem with state variable $\hat{x}:=(x, x_{n+1})$ by adjoining an auxiliary state $x_{n+1}$ to the state $x$,  one then defines the dynamics to be given by
\EQ\label{Mayer} \dot{\hat{x}}(t)=\left[\begin{array}{cc}\dot{x}(t)\\\dot{x}_{n+1}(t) \end{array}\right]=\left[\begin{array}{cc}f(x(t),u(t))\\ l(x(t),u(t)) \end{array}\right],\EN
where $f$ and $l$ are  respectively the dynamics and the running cost of the Bolza problem.
Then the equivalent Mayer problem is obtained by the infimization of the penalty function $\hat{h}(.)$ defined  as follows: 
\EQ\label{Mayer2} \hat{h}(\hat{x}(t_{f}))\equiv \hat{h}(x(t_{f}),x_{n+1}(t_{f})):=x_{n+1}(t_{f})+h(x(t_{f}))= J(t_{0},t_{f},x_{0},u),\EN
where  $h$ is the terminal cost function of the Bolza problem.
Note that after such a transformation from a Bolza problem the state space of the resulting Mayer problem is $\mathcal{M}_{B}\times \mathds{R}$, where $\mathcal{M}_{B}$ is the state manifold  of the Bolza problem.


\subsection{Elementary Control and Tangent Perturbations}
\label{ss1}
We now present some results from \cite{Agra}, \cite{Barbero} and \cite{Lee}. It is essential to note that henceforth in this paper we treat the general Mayer problem with state space manifold denoted by $\mathcal{M}$. In the special case where Mayer OCP is derived from a Bolza problem $\mathcal{M}_{B}$ takes the product form given in the previous section.

Consider the nominal control  input $u(.)$ and define the associated perturbed control as 
\EQ \label{needle}u_{\pi(t^{1},u_{1})}(t,\epsilon)=\left\{ \begin{array}{cc} \quad u_{1} \quad t^{1}-\epsilon \leq t\leq t^{1},\\ u(t)\quad\mbox{elsewhere,} \end{array}\right. \EN
where $0\leq \epsilon<\infty, u_{1}\in U$. For brevity in notation $u_{\pi(t^{1},u_{1})}(t,\epsilon)$ shall be written $u_{\pi}(t,\epsilon)$.

Associated to $u_{\pi}(.,.)$ we have the corresponding state trajectory $x_{\pi}(.,.)$ on $\mathcal{M}$. It may be shown under suitable hypotheses, $lim_{\epsilon\rightarrow 0}x_{\pi}(t,\epsilon)=x(t) $ uniformly for $t_{0}\leq t \leq t_{f}$, see \cite{Piccoli} and  \cite{Lee}. 
Following (\ref{1}),  the flow resulting from the perturbed control is defined as:
\EQ \Phi_{\pi, f}^{(t, s), x}(.):[0,\tau]\rightarrow \mathcal{M},\quad x\in \mathcal{M},t,s\in [t_{0},t_{f}], \tau\in \mathds{R}^{+}, \Phi_{\pi, f}^{(t, s), x}(\epsilon)\in \mathcal{M},\nnum\EN
where $\Phi_{\pi, f}^{(t, s), x}(.)$ is the flow corresponding to the perturbed control $u_{\pi}(t,\epsilon)$, i.e. $\Phi_{\pi, f}^{(t, s), x}(\epsilon):=\Phi_{f^{u_{\pi}(t,\epsilon)}}^{(t, s)}(x(s))$.
The following lemma gives the formula of the variation of  $\Phi_{\pi, f}^{(t,s),x}(.)$ at the limit from the right $0^{+}:=lim_{\epsilon\downarrow 0} \epsilon$. 
We recall that the point $t^{1}\in (t_{0},t_{f})$ is called a Lebesgue point of  $u(.)$ if, (\cite{Agra}):
\EQ\lim_{s_{1}\downarrow t^{1}}\frac{1}{|s_{1}-t^{1}|}\int^{s_{1}}_{t^{1}}|u(\tau)-u(t^{1})|d\tau=0.\nnum \EN 
For any $u\in L_{\infty}([t_{0},t_{f}],U)$, $u$ may be modified on a set of measure zero so that all points are   Lebesgue points (see \cite{Rudin}, page 158, and \cite{Segal}) in which case, necessarily, the value of any cost function is unchanged.
\begin{lemma}[\cite{Barbero}]
\label{l2}
For a Lebesgue time $t^{1}$, the curve $ \Phi_{\pi, f}^{(t^{1},s),x}(.):=\Phi_{f^{u_{\pi}(t,\epsilon)}}^{(t^{1}, s)}(x(s)):[0,\tau]\rightarrow \mathcal{M}$ is differentiable from the right at $\epsilon=0$ and the corresponding tangent vector $\frac{d}{d\epsilon}\Phi_{\pi, f}^{(t^{1},s),x}|_{\epsilon=0}$ is given by
\EQ \label{12}\frac{d}{d\epsilon}\Phi_{\pi, f}^{(t^{1},s),x}|_{\epsilon=0}=f(x(t^{1}),u_{1})-f(x(t^{1}),u(t^{1}))\hspace{.1cm}\in\hspace{.1cm} T_{x(t^{1})}\mathcal{M}.\EN
\end{lemma}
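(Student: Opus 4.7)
The approach is to pass to a local coordinate chart around $x(t^{1})$, write the perturbed flow as a composition of two flows, and then use the Lebesgue-point property of $t^{1}$ to extract the stated tangent vector. The identity $\frac{d}{d\epsilon}\Phi^{(t^{1},s),x}_{\pi,f}|_{\epsilon=0}$ is intrinsically an element of $T_{x(t^{1})}\mathcal{M}$, so after the computation in a chart one checks independence of the chart (or equivalently that $f(x(t^{1}),u_{1})-f(x(t^{1}),u(t^{1}))$ is a well-defined vector in $T_{x(t^{1})}\mathcal{M}$ since both summands lie in that fibre).

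First, I would exploit the semigroup structure of the flow. For $s<t^{1}-\epsilon<t^{1}$, the definition of $u_{\pi}(\cdot,\epsilon)$ together with (\ref{1})--(\ref{2}) gives the factorization
\begin{equation*}
\Phi^{(t^{1},s),x}_{\pi,f}(\epsilon)
=\Phi^{(t^{1},t^{1}-\epsilon)}_{f^{u_{1}}}\!\bigl(x(t^{1}-\epsilon)\bigr),
\qquad
x(t^{1}-\epsilon)=\Phi^{(t^{1}-\epsilon,s)}_{f^{u}}(x(s)).
\end{equation*}
At $\epsilon=0$ this reduces to $x(t^{1})$, so it suffices to analyze the right-derivative of the right-hand side.

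Next, working in a chart centered at $x(t^{1})$, I would write each piece in integral form:
\begin{equation*}
\Phi^{(t^{1},t^{1}-\epsilon)}_{f^{u_{1}}}(y)
= y+\int_{t^{1}-\epsilon}^{t^{1}}\! f\!\left(\Phi^{(r,t^{1}-\epsilon)}_{f^{u_{1}}}(y),u_{1}\right)dr,
\end{equation*}
\begin{equation*}
x(t^{1}-\epsilon)=x(t^{1})-\int_{t^{1}-\epsilon}^{t^{1}}\! f\!\bigl(x(r),u(r)\bigr)\,dr.
\end{equation*}
Substituting and dividing by $\epsilon$ produces
\begin{equation*}
\frac{\Phi^{(t^{1},s),x}_{\pi,f}(\epsilon)-x(t^{1})}{\epsilon}
= \frac{1}{\epsilon}\int_{t^{1}-\epsilon}^{t^{1}}\!\left[f\!\left(\Phi^{(r,t^{1}-\epsilon)}_{f^{u_{1}}}(x(t^{1}-\epsilon)),u_{1}\right)-f(x(r),u(r))\right]dr.
\end{equation*}
Continuity of $f$ in the state variable, together with the uniform convergence $\Phi^{(r,t^{1}-\epsilon)}_{f^{u_{1}}}(x(t^{1}-\epsilon))\to x(t^{1})$ and $x(r)\to x(t^{1})$ as $r\to t^{1}$, lets me replace the state arguments by $x(t^{1})$ up to a remainder of order $o(1)$ uniformly in $r\in[t^{1}-\epsilon,t^{1}]$.

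The main obstacle is the second term in the integrand, which carries the original $L_{\infty}$ control $u(r)$ and is therefore only measurable. This is precisely where the hypothesis that $t^{1}$ is a Lebesgue point enters: since $u$ is essentially bounded and $f$ is continuous in $u$ on the compact set $U$, the Lebesgue-point condition
\begin{equation*}
\lim_{\epsilon\downarrow 0}\frac{1}{\epsilon}\int_{t^{1}-\epsilon}^{t^{1}}\bigl|u(r)-u(t^{1})\bigr|\,dr = 0
\end{equation*}
implies
\begin{equation*}
\lim_{\epsilon\downarrow 0}\frac{1}{\epsilon}\int_{t^{1}-\epsilon}^{t^{1}} f(x(t^{1}),u(r))\,dr
= f(x(t^{1}),u(t^{1})).
\end{equation*}
Combining this with the straightforward limit of the $u_{1}$ term yields (\ref{12}). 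Finally, since both $f(x(t^{1}),u_{1})$ and $f(x(t^{1}),u(t^{1}))$ lie in the single fibre $T_{x(t^{1})}\mathcal{M}$, their difference is a well-defined tangent vector there and the chart-dependence drops out, giving the coordinate-free statement of the lemma.
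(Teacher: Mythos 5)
The paper does not prove Lemma \ref{l2}: it is imported verbatim from \cite{Barbero}, so there is no internal argument to compare against. Your proof is the standard needle-variation computation and is essentially correct: the factorization $\Phi^{(t^{1},s),x}_{\pi,f}(\epsilon)=\Phi^{(t^{1},t^{1}-\epsilon)}_{f^{u_{1}}}\bigl(x(t^{1}-\epsilon)\bigr)$ is exactly the right decomposition (valid once $\epsilon<t^{1}-s$), the passage to a chart is legitimate because the right derivative of a curve at a point is a chart-independent element of the tangent fibre, and the integral representation followed by division by $\epsilon$ reduces the claim to two averaged limits, of which the $u_{1}$-term is handled by continuity of $f$ and uniform convergence of the flow point to $x(t^{1})$. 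The one step you assert without justification is the implication from the Lebesgue-point property of $u$ to $\lim_{\epsilon\downarrow 0}\frac{1}{\epsilon}\int_{t^{1}-\epsilon}^{t^{1}}f(x(t^{1}),u(r))\,dr=f(x(t^{1}),u(t^{1}))$: a Lebesgue point of $u$ is not automatically a Lebesgue point of $r\mapsto f(x(t^{1}),u(r))$, since composition with a merely continuous $f$ does not commute with averaging. The implication is nevertheless true here and deserves one more line: given $\delta>0$, uniform continuity of $f(x(t^{1}),\cdot)$ on the compact set $U$ yields $\eta>0$ with $|f(x(t^{1}),u)-f(x(t^{1}),u')|<\delta$ whenever $|u-u'|<\eta$, and Chebyshev's inequality bounds the measure of $\{r\in[t^{1}-\epsilon,t^{1}]:\ |u(r)-u(t^{1})|\ge\eta\}$ by $\eta^{-1}\int_{t^{1}-\epsilon}^{t^{1}}|u(r)-u(t^{1})|\,dr=o(\epsilon)$, so with $f$ bounded on $\overline{B}\times U$ for a compact neighbourhood $\overline{B}$ of $x(t^{1})$ the averaged discrepancy is at most $\delta+o(1)$, and $\delta$ is arbitrary. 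With that line added, your argument is complete and matches the standard proof in the cited source.
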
\hspace{8cm} \\
The tangent vector $ f(x(t^{1}),u_{1})-f(x(t^{1}),u(t^{1}))$ is called the \textit{elementary perturbation vector} associated to the perturbed control $u_{\pi}$ at $(x(t),t)$.
The displacement of the tangent vectors at $x\in \mathcal{M}$ is given by the push-forward of the vector field $f$, see sections below. 
\subsection{Adjoint Processes and the Hamiltonian}
In this section we present the  definitions of the adjoint process and the Hamiltonian function which appear in the statement of the Minimum Principle.  In the case $\mathcal{M}=\mathds{R}^{n}$,  by the smoothness  of $f$ we may define the following system of  differential equations:
\EQ \label{4} \dot{\lambda}^{T}(t)=-\lambda^{T}(t)\frac{\partial f}{\partial x}(x(t),u(t)),\hspace{.1cm} t\in[t_{0},t_{f}],\hspace{.1cm} x(t_{0})\in \mathds{R}^{n}.\EN  
 The matrix solution $\varphi$ of  $\dot{\varphi}(t)=\frac{\partial f}{\partial x}(x(t),u(t))\varphi(t),$ where $\varphi(0)=I,$ gives the transformation between tangent vectors on the state trajectory $x(t)$ from time $t^{1}$ to $t^{2}$ (see \cite{Lee}), in other words, considering $v_{1}$ as a tangent vector at $x(t^{1}),$ the push-forward of $v_{1}$  under  $\Phi^{(t^{2},t^{1})}_{f}$ is 
\EQ v_{2}=T\Phi^{(t^{2},t^{1})}_{f}(v_{1})=\varphi(t^{2}-t^{1})v_{1},\hspace{.2cm} v_{1}\in T_{x(t^{1})}\mathds{R}^{n}\simeq \mathds{R}^{n},t^{1},t^{2}\in[t_{0},t_{f}].\nnum\EN
Evidently the vector $v(t)=\phi(t)v(0)$ is the solution of the following differential equation:
\EQ\label{8} \dot{v}(t)=\frac{\partial f}{\partial x}(x(t),u(t))v(t),\quad v(0)=v_{0}, v(t)\in T_{x(t)}\mathds{R}^{n}\simeq \mathds{R}^{n}.\EN
A key feature of the solution of  (\ref{4}) is that along  $x(.)$, $\lambda^{T}(.)v(.)$ remains constant since
\EQ \label{111}\frac{d}{dt}(\lambda^{T}(t)v(t))=\dot{\lambda}^{T}(t)v(t)+\lambda^{T}(t)\dot{v}(t)=-\lambda^{T}(t)\frac{\partial f}{\partial x}v(t)+\lambda^{T}(t)\frac{\partial f}{\partial x}v(t)=0.\nnum\\\EN
For a general Riemannian manifold $\mathcal{M}$, the role of the adjoint process $\lambda$ is played by a trajectory in the cotangent bundle of $\mathcal{M}$, i.e.  $\lambda(t)\in T^{*}\mathcal{M}$. As in the definition of the tangent lift, we define the \textit{cotangent lift} which corresponds to the variation of a differential form $\alpha\in T^{*}\mathcal{M}$ (see \cite{Tyner}):
\EQ \label{6} f^{T^{*},u}(\alpha_{x}):=\frac{d}{dt}|_{t=-s}T^{*}\Phi^{(-t,s)}_{f^{u}}(\alpha_{x}),\quad \alpha_{x}\in T^{*}_{x}\mathcal{M},\EN
where $x=x(t)=\Phi^{(t,s)}_{f^{u}}(x(s))$.
As in (\ref{2}), in the local coordinates, $(x,p)$, of $T^{*}\mathcal{M}$, we have
\EQ \label{e} f^{T^{*},u}(x,p)=\left [f^{u,i}(x)\frac{\partial}{\partial x^{i}}-(\frac{\partial f^{u,i}}{\partial x^{j}}p^{j})\frac{\partial}{\partial p^{i}} \right ]^{n}_{i,j=1},\EN
where $T^{*}\Phi^{(-t,s)}_{f^{u}}(.)$ is the pull back of $\Phi^{(-t,s)}_{f^{u}}$ applied to differential forms $\alpha_{x}\in T^{*}_{x}\mathcal{M}$. The minus sign  in front of $t$ in (\ref{6}) is due to the fact that pull backs act in the opposite sense to push forwards, therefore the variation of a  covector $\alpha_{x}$  at $x=x(s)$ depends upon $\Phi^{-1}$ which notationally corresponds to $-t$, see \cite{Tyner}.   
The following lemma gives the connection between the cotangent lift defined in (\ref{6}) and its corresponding flow on $T^{*}\mathcal{M}$. Let ($T^{*}\Phi^{(t,s)}_{f})^{-1}=T^{*}\Phi^{(-t,s)}_{f}$, the pull back of  $\Phi^{-1}$, whose existence is guaranteed since $\Phi:\mathcal{M}\rightarrow \mathcal{M}$ is a diffeomorphism, see \cite{Tyner}.

\begin{lemma}[\cite{Barbero}]
\label{l3}
Consider $f(x(t),u(t))$ as a time dependent vector field on $\mathcal{M}$, then the flow  
$ \Gamma: I\times I\times T^{*}\mathcal{M}\rightarrow T^{*}\mathcal{M},$ $I=[t_{0},t_{f}]$, satisfies 
\EQ\label{7} \Gamma(t,s,(x,p))=(\Phi^{(t,s)}_{f}(x),(T^{*}\Phi^{(t,s)}_{f})^{-1}(p)), \hspace{.1cm}(x,p)\in T^{*}\mathcal{M},\EN
and $\Gamma$ is the corresponding integral flow of  $f^{T^{*},u}$.

\end{lemma}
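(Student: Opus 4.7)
The plan is to verify directly that the map $\Gamma$ defined by the right-hand side of (\ref{7}) satisfies both the correct initial condition and the defining ODE of the integral flow of $f^{T^{*},u}$, and then invoke uniqueness of solutions to flow equations. This mirrors the proof of the tangent analog in Lemma \ref{l1}, with pull backs (and their inverses) replacing push forwards throughout.

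First I would check the initial condition: at $t=s$ one has $\Phi^{(s,s)}_{f}=\mathrm{id}_{\mathcal{M}}$, so its cotangent lift and the inverse thereof are the identity on $T^{*}\mathcal{M}$, giving $\Gamma(s,s,(x,p))=(x,p)$ as required. Next I would differentiate $\Gamma$ with respect to $t$ componentwise. The base component yields
\EQ \frac{d}{dt}\Phi^{(t,s)}_{f}(x)=f^{u}\big(\Phi^{(t,s)}_{f}(x)\big),\nnum\EN
which matches the first summand of the local expression (\ref{e}) for $f^{T^{*},u}$.

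For the cotangent fiber component, I would work in local coordinates $(x,p)$ and set $A(t):=T\Phi^{(t,s)}_{f}$, so that the pull back is represented by $A(t)^{T}$ and hence the inverse pull back by $(A(t)^{-1})^{T}$. The variational equation $\dot{A}(t)=\frac{\partial f^{u}}{\partial x}(x(t))\,A(t)$ with $A(s)=I$, which is the same object that drives Lemma \ref{l1}, implies $\frac{d}{dt}A(t)^{-1}=-A(t)^{-1}\frac{\partial f^{u}}{\partial x}$, and therefore
\EQ \frac{d}{dt}\bigl((A(t)^{-1})^{T}p\bigr)=-\Bigl(\frac{\partial f^{u}}{\partial x}\Bigr)^{T}(A(t)^{-1})^{T}p.\nnum\EN
Comparing this with the second summand of (\ref{e}) evaluated at $\Gamma(t,s,(x,p))$ shows that $\Gamma$ integrates $f^{T^{*},u}$. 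Uniqueness of integral curves of the smooth vector field $f^{T^{*},u}$ on $T^{*}\mathcal{M}$, together with coincidence at $t=s$, then forces $\Gamma$ to be the flow of $f^{T^{*},u}$.

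The main subtlety, and where I expect to need to be most careful, is the sign convention in (\ref{6}): the cotangent lift is defined via differentiation at $t=-s$ of $T^{*}\Phi^{(-t,s)}_{f^{u}}$, encoding the fact that pull backs act in the opposite sense to push forwards. Consistently tracking the identification $(T^{*}\Phi^{(t,s)}_{f})^{-1}=T^{*}\Phi^{(-t,s)}_{f}$ used in the paper is what produces the minus sign in the fiber component of (\ref{e}); once this bookkeeping is correct, the lemma reduces to the dual of Lemma \ref{l1} and no new analytic ingredients are required.
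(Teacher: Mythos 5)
The paper does not prove this lemma; it is imported verbatim from \cite{Barbero}, so there is no in-paper argument to compare against. Your verification is the standard (and correct) one: check the initial condition, differentiate the base and fiber components, match against the local expression of the cotangent lift, and conclude by uniqueness of integral curves; this is exactly the dual of Lemma \ref{l1} and is consistent with the adjoint equation (\ref{4}) and with Proposition \ref{p1} (constancy of $\langle\Gamma,\Psi\rangle$ forces $\dot p=-(\partial f/\partial x)^{T}p$, which is what your computation of $\frac{d}{dt}(A(t)^{-1})^{T}p$ yields). The only point of friction is that the fiber term of (\ref{e}) as printed reads $-\frac{\partial f^{u,i}}{\partial x^{j}}p^{j}$ rather than $-\frac{\partial f^{u,j}}{\partial x^{i}}p_{j}$; your transposed version is the one compatible with (\ref{4}) and Proposition \ref{p1}, so this is an index typo in the paper rather than a gap in your argument.
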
\hspace{8cm}\\
 We now generalize  (\ref{4}) and (\ref{8})   to  differentiable manifolds.
Along a given trajectory $\lambda(.)\in T^{*}\mathcal{M}$, the variation with respect to time, $\dot{\lambda}(t)$, is an element of $TT^{*}\mathcal{M}$. The vector field defined  in (\ref{6}) is thus the mapping $ f^{T^{*},u}:T^{*}\mathcal{M}\rightarrow TT^{*}\mathcal{M}$, which generalizes  (\ref{4}) to a mapping from $\lambda(t)\in T^{*}\mathcal {M}$ to $\dot{\lambda}(t)\in TT^{*}\mathcal{M}$.
The generalization of (\ref{111}) to $\mathcal{M}$ is given in the following proposition.
\begin{proposition}[\cite{Barbero}]
\label{p1}
Let $f_{q}(.,u(.)): \mathcal{M}\times I\rightarrow T\mathcal{M},\quad I=[t_{0},t_{f}],$ be a time dependent vector field giving rise to the associated  pair $f^{T,u},f^{T^{*},u}$; then along an integral curve of $f(.,u)$ on $\mathcal{M}$ 
\EQ \langle\Gamma,\Psi\rangle:I\rightarrow \mathds{R},\nnum\EN
is a constant map, where $\Gamma$ is an integral curve of $f^{T^{*},u}$ in $T^{*}\mathcal{M}$ and $\Psi$ is an integral curve of $f^{T,u}$ in $T\mathcal{M}$.
\end{proposition}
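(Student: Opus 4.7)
The plan is to reduce the constancy of $\langle\Gamma,\Psi\rangle$ to the tautological duality between the push-forward and the pull-back, by combining the explicit flow representations given in Lemma~\ref{l1} and Lemma~\ref{l3}. First I would fix initial data $(x_{0},v_{0})\in T_{x_{0}}\mathcal{M}$ for $\Psi$ and $(x_{0},p_{0})\in T^{*}_{x_{0}}\mathcal{M}$ for $\Gamma$ at time $t_{0}$, so that both curves project to the same base curve $x(t)=\Phi^{(t,t_{0})}_{f}(x_{0})$ on $\mathcal{M}$. In this way the natural pairing $\langle\Gamma(t),\Psi(t)\rangle$ between the cotangent and tangent fibres over the common base point $x(t)$ is well-defined for every $t\in I$.

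Next I would invoke Lemma~\ref{l1} to write $\Psi(t,t_{0},(x_{0},v_{0}))=(x(t),T\Phi^{(t,t_{0})}_{f}(v_{0}))$, and Lemma~\ref{l3} to write $\Gamma(t,t_{0},(x_{0},p_{0}))=(x(t),(T^{*}\Phi^{(t,t_{0})}_{f})^{-1}(p_{0}))$. The pairing then takes the form
\[
\langle\Gamma(t),\Psi(t)\rangle=\big\langle (T^{*}\Phi^{(t,t_{0})}_{f})^{-1}(p_{0}),\; T\Phi^{(t,t_{0})}_{f}(v_{0})\big\rangle.
\]
Setting $\beta(t):=(T^{*}\Phi^{(t,t_{0})}_{f})^{-1}(p_{0})\in T^{*}_{x(t)}\mathcal{M}$, the defining identity of the pull-back on covectors, namely $\langle T^{*}\Phi(\alpha),w\rangle=\langle\alpha,T\Phi(w)\rangle$, yields
\[
\langle p_{0},v_{0}\rangle=\big\langle T^{*}\Phi^{(t,t_{0})}_{f}(\beta(t)),v_{0}\big\rangle=\big\langle\beta(t),T\Phi^{(t,t_{0})}_{f}(v_{0})\big\rangle.
\]
Hence $\langle\Gamma(t),\Psi(t)\rangle=\langle p_{0},v_{0}\rangle$ for all $t\in I$, which is precisely the constancy claimed in the statement.

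As a sanity check I would verify the same conclusion by a direct differentiation in local coordinates, using (\ref{2}) and (\ref{e}): writing $\Psi=(x(t),v(t))$ and $\Gamma=(x(t),p(t))$, equations (\ref{8}) and (\ref{4}) give $\frac{d}{dt}(p^{T}v)=-p^{T}\frac{\partial f}{\partial x}v+p^{T}\frac{\partial f}{\partial x}v=0$, in agreement with the Euclidean calculation (\ref{111}). The main obstacle is not conceptual but notational: the sign convention in definition (\ref{6}), where the cotangent lift is defined by differentiating $T^{*}\Phi^{(-t,s)}$, must be reconciled with the inverse pull-back $(T^{*}\Phi^{(t,s)}_{f})^{-1}$ appearing in Lemma~\ref{l3}. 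Care has to be taken so that the minus sign in (\ref{6}) corresponds exactly to passing to the inverse diffeomorphism, rather than producing an extra sign that would spoil the duality identity and the cancellation above.
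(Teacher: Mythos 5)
Your argument is correct. Note that the paper does not actually prove Proposition~\ref{p1} — it is quoted from \cite{Barbero} — so there is no in-text proof to compare against; but your reduction of the constancy of $\langle\Gamma,\Psi\rangle$ to the duality $\langle T^{*}\Phi(\alpha),w\rangle=\langle\alpha,T\Phi(w)\rangle$, using the flow representations of Lemmas~\ref{l1} and~\ref{l3} with both curves lying over the same base integral curve, is exactly the standard proof, and your coordinate sanity check is precisely the paper's own Euclidean computation (\ref{111}). Your caution about the sign convention in (\ref{6}) is well placed but harmless here, since you only use the statement of Lemma~\ref{l3}, in which the fibre component of $\Gamma$ is already given as $(T^{*}\Phi^{(t,s)}_{f})^{-1}(p)$.
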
\hspace{8cm} \\

The integral curves $\Gamma$ and $\Psi$ are the generalizations of   $\lambda(.)$ and $v(.)$  in (\ref{8}) and (\ref{111}) in $\mathds{R}^{n}$ to the case of a differentiable manifold  $\mathcal{M}$.
The corresponding variation  of the elementary tangent perturbation  in  Lemma \ref{l2} is given in the following proposition.
\begin{proposition}[\cite{Barbero}]
\label{p2}
Let $\Psi:[t^{1}, t_{f}]\rightarrow T\mathcal{M}$ be the integral curve of $f^{T,u}$ with the initial condition $\Psi(t^{1})=[f(x(t^{1}),u_{1})-f(x(t^{1}),u(t^{1}))]\in T_{x(t^{1})}\mathcal{M}$, then 
\EQ \frac{d}{d\epsilon}\Phi_{\pi, f}^{(t,t^{1}),x}|_{\epsilon=0}=\Psi(t),\quad t\in [t^{1},t_{f}].\nnum\EN
\end{proposition}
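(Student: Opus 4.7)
The plan is to reduce Proposition \ref{p2} to the combination of Lemma \ref{l1} (which identifies the flow of the tangent lift with the push-forward of the base flow) and Lemma \ref{l2} (which gives the initial-time variation of the needle-perturbed flow). The essential observation is that for $t\in[t^{1},t_{f}]$ the perturbed control $u_{\pi}(\cdot,\epsilon)$ coincides with the nominal control $u(\cdot)$ on $[t^{1},t]$, so the perturbed flow factorizes as
$$\Phi_{\pi,f}^{(t,t^{1}),x}(\epsilon)=\Phi_{f^{u}}^{(t,t^{1})}\!\bigl(\Phi_{\pi,f}^{(t^{1},s),x}(\epsilon)\bigr),$$
where $s$ is any base time strictly less than $t^{1}-\epsilon$ for which Lemma \ref{l2} applies (equivalently, $\Phi_{\pi,f}^{(t^{1},s),x}(\epsilon)$ is the perturbed trajectory value at the Lebesgue instant $t^{1}$).

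First I would invoke the smooth dependence of the nominal flow on its initial condition to justify differentiating the composition above in $\epsilon$ by the chain rule, obtaining
$$\frac{d}{d\epsilon}\Phi_{\pi,f}^{(t,t^{1}),x}\Big|_{\epsilon=0}=T\Phi_{f^{u}}^{(t,t^{1})}\!\Big|_{x(t^{1})}\cdot\frac{d}{d\epsilon}\Phi_{\pi,f}^{(t^{1},s),x}\Big|_{\epsilon=0}.$$
Next, Lemma \ref{l2} identifies the rightmost factor with the elementary perturbation vector $f(x(t^{1}),u_{1})-f(x(t^{1}),u(t^{1}))\in T_{x(t^{1})}\mathcal{M}$, which by hypothesis is precisely the initial value $\Psi(t^{1})$ prescribed for the integral curve of $f^{T,u}$. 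Therefore
$$\frac{d}{d\epsilon}\Phi_{\pi,f}^{(t,t^{1}),x}\Big|_{\epsilon=0}=T\Phi_{f^{u}}^{(t,t^{1})}\!\Big|_{x(t^{1})}\cdot\Psi(t^{1}).$$

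Finally, Lemma \ref{l1} asserts that the flow of $f^{T,u}$ on $T\mathcal{M}$ has the explicit form $(x,v)\mapsto(\Phi_{f^{u}}^{(t,t^{1})}(x),T\Phi_{f^{u}}^{(t,t^{1})}(v))$. Applied to the initial datum $(x(t^{1}),\Psi(t^{1}))$, its fibre component at time $t$ is exactly $T\Phi_{f^{u}}^{(t,t^{1})}|_{x(t^{1})}\cdot\Psi(t^{1})$, which by uniqueness of integral curves equals $\Psi(t)$. Combining the two displays yields the claimed identity.

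The main obstacle is the first step: rigorously verifying the factorization and applying the chain rule. This requires knowing that $\epsilon\mapsto\Phi_{\pi,f}^{(t^{1},s),x}(\epsilon)$ lands in a single coordinate chart for small $\epsilon$ (so that the push-forward is well defined on its image) and that the nominal flow $\Phi_{f^{u}}^{(t,t^{1})}$ is $C^{1}$ in its initial state — both of which follow from the smoothness of the vector fields $f_{q}$ assumed in Definition \ref{d1} together with the uniform convergence $x_{\pi}(t,\epsilon)\to x(t)$ noted after \eqref{needle}. Once these are in place the argument is essentially a coordinate-free restatement of the classical variational equation.
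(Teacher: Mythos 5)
Your argument is correct: since $u_{\pi}(\cdot,\epsilon)$ agrees with the nominal control on $[t^{1},t]$, the factorization $\Phi_{\pi,f}^{(t,t^{1}),x}(\epsilon)=\Phi_{f^{u}}^{(t,t^{1})}\bigl(\Phi_{\pi,f}^{(t^{1},s),x}(\epsilon)\bigr)$ plus the chain rule, Lemma \ref{l2}, and Lemma \ref{l1} give exactly the claimed identity. The paper itself offers no proof to compare against --- the proposition is quoted from \cite{Barbero} --- but your derivation is precisely the reasoning implicit in the remark the paper makes immediately afterward, where Lemma \ref{l1} is used to rewrite $\Psi(t)$ as $T\Phi^{(t,t^{1})}_{f}\bigl([f(x(t^{1}),u_{1})-f(x(t^{1}),u(t^{1}))]\bigr)$; the only points worth making explicit are that the $\epsilon$-derivative is one-sided at $\epsilon=0$ and that $t^{1}$ must be a Lebesgue point for Lemma \ref{l2} to apply, both of which the paper's conventions already accommodate.
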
\hspace{8cm} \\
By the result above and Lemma \ref{l1} we have
\EQ\frac{d}{d\epsilon}\Phi_{\pi, f}^{(t,t^{1}),x}|_{\epsilon=0}=T\Phi^{(t,t^{1})}_{f}([f(x(t^{1}),u_{1})-f(x(t^{1}),u(t^{1}))])\hspace{.1cm}\in\hspace{.1cm} T_{x(t^{1})}\mathcal{M}.\nnum\EN
\subsection{Hamiltonian Functions and Vector Fields}
Here we recall the notions of Hamiltonian vector fields (see e.g.   \cite{Arnold}), which were employed in \cite{Agra} to obtain a Minimum Principle for optimal control problems in a geometrical framework.


For an optimal  (non-hybrid) control problem defined on the state manifold $\mathcal{M}$, with controlled vector field $f(x(t),u(t))\in T_{x(t)}\mathcal{M}$, the Hamiltonian function for the Mayer problem is defined as:
\EQ\label{ham1} H: T^{*}\mathcal{M}\times U\rightarrow \mathds{R},\EN
\EQ\label{ham2} H(p, x, u)=\langle p,f(x,u)\rangle,\quad p\in T^{*}_{x}\mathcal{M}, \hspace{.2cm}f(x,u)\hspace{.1cm}\in\hspace{.1cm} T_{x}\mathcal{M}.\EN
In general, the Hamiltonian is a smooth function $H\in C^{\infty}(T^{*}\mathcal{M})$  with an  associated  Hamiltonian vector field  $\overrightarrow{H}\in \mathfrak{X}(T^{*}\mathcal{M})$ defined by (see \cite{Agra})
\EQ \omega_{\lambda}(.,\overrightarrow{H})=dH,\quad \lambda\in T^{*}\mathcal{M},\nnum\EN
where $\omega_{\lambda}\in \Omega^{2}(T^{*}\mathcal{M})$ is the symplectic form (see e.g. \cite{Fland}, \cite{Lee2}) defined on $T^{*}\mathcal{M}$ (see \cite{Agra,Jurd}) and $\mathfrak{X}(T^{*}\mathcal{M})$ is the space of smooth vector fields defined on  $T^{*}\mathcal{M})$.
The Hamiltonian vector field satisfies $i_{\overrightarrow{H}}\omega_{\lambda}=-dH$, (see \cite{Agra}) where $i_{\overrightarrow{H}}$ is the contraction mapping (see \cite{ jost, Lee2}) along the vector field $\overrightarrow{H}$.
In the local coordinates $(x,p)$ of $T^{*}\mathcal{M}$, we have:
\EQ dH=\sum^{n}_{i=1}\frac{\partial H}{\partial p^{i}}dp^{i}+\frac{\partial H}{\partial x^{i}}dx^{i},\quad \overrightarrow{H}=\sum^{n}_{i=1}\frac{\partial H}{\partial p^{i}}\frac{\partial }{\partial x^{i}}-\frac{\partial H}{\partial x^{i}}\frac{\partial }{\partial p^{i}}.\EN
So the Hamiltonain system $\dot{\lambda}(t)=\overrightarrow{H}(\lambda),\hspace{.1cm} \lambda\in T^{*}\mathcal{M}$ is locally written as:
\EQ\label{hamham}&&\hspace{2cm}\dot{x}(t)=\frac{\partial H}{\partial p^{i}}, \hspace{2cm} \dot{p}(t)= -\frac{\partial H}{\partial x^{i}},\nnum\\&& \hspace{.2cm}\mbox{where}\hspace{.5cm} \lambda(t)=(x(t),p(t))\in T^{*}\mathcal{M},\hspace{.2cm} x(t_{0})=x_{0}, \lambda(t_{f})=dh(x(t_{f}))\in T^{*}_{x(t_{f})}\mathcal{M},\nnum\EN
where \EQ dh= \sum^{n}_{i=1}\frac{\partial h}{\partial x^{i}}dx^{i}\in  \Omega^{1}(\mathcal{M}).\EN

\subsection{Pontryagin Minimum Principle}
For standard (non-hybrid) optimal control problems defined on a Riemannian manifold $\mathcal{M}$ we have the following result known as Pontryganin Minimum Principle.
\begin{theorem}[\cite{Lee}]
Consider an OCP satisfying hypotheses  \textbf{\textit{A1-A3}} ($L=0, q_{i}=q$) defined on a Riemannian manifold $\mathcal{M}$. Then  corresponding to  an  optimal  control and optimal state trajectory pair, $(u^{o}, x^{o})$  there exists a nontrivial adjoint trajectory $\lambda^{o}(.)=(x^{o}(.),p^{o}(.))\in T^{*}\mathcal{M},$ defined along the optimal state trajectory, such that:
\EQ \hspace{.6cm}H(x^{o}(t),p^{o}(t),u^{o}(t))\leq H(x^{o}(t),p^{o}(t),v), \hspace{.2cm}\forall v\in U, t\in[t_{0},t_{f}],\nnum \EN
and the corresponding optimal adjoint trajectory $\lambda^{o}(.)\in T^{*}\mathcal{M}$ satisfies:
\EQ \dot{\lambda^{o}}(t)=\overrightarrow{H}(\lambda^{o}(t)),\quad t\in [t_{0},t_{f}].\nnum\EN

\end{theorem}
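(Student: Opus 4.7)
The plan is to obtain the minimization inequality and the adjoint equation by a needle--variation argument built on Lemmas \ref{l1}--\ref{l3} and Propositions \ref{p1}--\ref{p2}. By the reduction discussed in Section 3.1, I may assume without loss of generality that the problem is in Mayer form, so the cost equals $h(x(t_f))$. Fix any Lebesgue point $t^{1}\in(t_{0},t_{f})$ of $u^{o}$ and any value $u_{1}\in U$, and form the elementary needle variation $u_{\pi}$ defined in (\ref{needle}). By Lemma \ref{l2} the perturbed flow is right--differentiable at $\epsilon=0$ with elementary perturbation vector $v(t^{1}):=f(x^{o}(t^{1}),u_{1})-f(x^{o}(t^{1}),u^{o}(t^{1}))\in T_{x^{o}(t^{1})}\mathcal{M}$. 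Proposition \ref{p2}, combined with Lemma \ref{l1}, then propagates the variation along the nominal flow via the tangent lift: setting $\Psi(t):=T\Phi^{(t,t^{1})}_{f^{u^{o}}}(v(t^{1}))\in T_{x^{o}(t)}\mathcal{M}$, one has the first--order expansion $x_{\pi}(t,\epsilon)=x^{o}(t)+\epsilon\,\Psi(t)+o(\epsilon)$ uniformly for $t\in[t^{1},t_{f}]$.

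I next construct the adjoint trajectory by prescribing its terminal value and transporting backwards by the cotangent lift. Define $\lambda^{o}(\cdot)=(x^{o}(\cdot),p^{o}(\cdot))\in T^{*}\mathcal{M}$ to be the integral curve of $f^{T^{*},u^{o}}$ satisfying the transversality condition $\lambda^{o}(t_{f})=dh(x^{o}(t_{f}))$; by Lemma \ref{l3} this is the explicit covector $\lambda^{o}(t)=(T^{*}\Phi^{(t_{f},t)}_{f^{u^{o}}})^{-1}(dh(x^{o}(t_{f})))$ based at $x^{o}(t)$. Proposition \ref{p1} then guarantees that the pairing $t\mapsto \langle\lambda^{o}(t),\Psi(t)\rangle$ is constant on $[t^{1},t_{f}]$, so that
\[
   \langle p^{o}(t^{1}),\,v(t^{1})\rangle \;=\; \langle dh(x^{o}(t_{f})),\,\Psi(t_{f})\rangle.
\]

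Optimality of $(u^{o},x^{o})$ gives $h(x_{\pi}(t_{f},\epsilon))\ge h(x^{o}(t_{f}))$ for all sufficiently small $\epsilon\ge 0$, and the first--order expansion of $\Psi$ above therefore yields $\langle dh(x^{o}(t_{f})),\Psi(t_{f})\rangle\ge 0$. Substituting the definition of $v(t^{1})$ and rearranging produces $\langle p^{o}(t^{1}),f(x^{o}(t^{1}),u^{o}(t^{1}))\rangle\le \langle p^{o}(t^{1}),f(x^{o}(t^{1}),u_{1})\rangle$, which by (\ref{ham2}) is exactly $H(x^{o}(t^{1}),p^{o}(t^{1}),u^{o}(t^{1}))\le H(x^{o}(t^{1}),p^{o}(t^{1}),u_{1})$. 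Since Lebesgue points fill a set of full measure and $u_{1}\in U$ was arbitrary, the minimization condition holds for almost every $t\in[t_{0},t_{f}]$ and all $v\in U$. Finally, comparing the local formula (\ref{e}) for $f^{T^{*},u}$ with the local form (\ref{hamham}) of the Hamiltonian vector field $\overrightarrow{H}$ associated to $H(\cdot,\cdot,u^{o}(t))$ shows that $f^{T^{*},u^{o}(t)}=\overrightarrow{H}(\cdot)$ along the optimal lift, and so $\dot{\lambda}^{o}(t)=\overrightarrow{H}(\lambda^{o}(t))$ holds a.e.

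The main obstacle is the rigorous passage from the pointwise right--differentiability of the perturbed flow at the base point $t^{1}$ (Lemma \ref{l2}) to the uniform first--order expansion of the terminal state $x_{\pi}(t_{f},\epsilon)$, which legitimizes the step $h(x_{\pi}(t_{f},\epsilon))-h(x^{o}(t_{f}))=\epsilon\,dh(x^{o}(t_{f}))(\Psi(t_{f}))+o(\epsilon)$ used above; Proposition \ref{p2} delivers exactly this composition of the needle variation with the smooth nominal flow on $[t^{1},t_{f}]$. Nontriviality of $\lambda^{o}$ reduces to $dh(x^{o}(t_{f}))\ne 0$, the \emph{normal} case; in the degenerate case one would introduce a nonnegative cost multiplier $\lambda_{0}$ and work instead with the pair $(\lambda_{0},\lambda^{o})$, but this is unnecessary when $h$ is a non--degenerate terminal penalty.
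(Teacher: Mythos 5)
Your proposal is correct and follows essentially the same route the paper itself relies on: the theorem is quoted from \cite{Lee}, but Steps 1 and 3 of the paper's proof of Theorem \ref{t2} (Appendix \ref{s1}), together with Lemma \ref{l6}, carry out exactly your argument --- needle variation at a Lebesgue point, forward propagation by the tangent lift (Lemma \ref{l2}, Proposition \ref{p2}), backward transport of $dh(x^{o}(t_{f}))$ by the cotangent lift, invariance of the pairing (Proposition \ref{p1}), and identification of $f^{T^{*},u}$ with $\overrightarrow{H}$ via (\ref{e}) and (\ref{hamham}). The only quibble is notational: the adjoint should be written $p^{o}(t)=T^{*}\Phi^{(t_{f},t)}_{f^{u^{o}}}dh(x^{o}(t_{f}))$ as in (\ref{lam2}), i.e.\ the plain pull-back rather than its inverse.
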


The Minimum Principle gives necessary conditions for optimality; conditions under which the Minimum Principle is sufficient for optimality are given in \cite{Bolt} and  \cite{vint3}.


\section{The Hybrid Minimum Principle for Autonomous Impulsive Hybrid Systems}
\label{sec3}
Here we consider a simple  impulsive autonomous hybrid system consisting of one switching manifold.  Consider a hybrid system with a single switching from the discrete state $q_{0}$ to the discrete state $q_{1}$ at the unique switching time $t_{s}$ on the optimal trajectory $(x^{o}(.),u^{o}(.))$  associated with the dynamics:
\EQ \dot{x}_{q_0}(t) = f_{q_0}(x(t), u(t)), \quad a.e.  \: t \in [t_{0}, t_{s}),\nnum\EN
\EQ \dot{x}_{q_1}(t) = f_{q_1}(x(t), u(t)), \quad a.e.  \: t \in [t_s, t_{f}],\nnum\EN
where $x(t_{0})=x_{0}, t_{s}=t_{1}, t_{f}=t_{2}$ and 
\EQ f_{q_{i}}(.,u(.)): \mathcal{M}\times [t_i, t_{i+1})\rightarrow T\mathcal{M},\quad i=0,1,\nnum\EN
together with a smooth state jump $\zeta:=\zeta_{q_{0},q_{1}}:\mathcal{M}\rightarrow \mathcal{M} $ with the following action:
\EQ x^{o}(t_{s})=\zeta(x^{o}(t^{-}_{s}))=lim_{t\rightarrow t^{-}_{s}}\zeta(x(t)),\quad x^{o}(t^{-}_{s})\in \mathcal{S}\subset \mathcal{M}.\nnum\EN
We shall assume the switching manifold $\mathcal{S}$ is an embedded $n-1$ dimensional submanifold $\mathcal{S}:=n_{q_{0},q_{1}}$ which consists of a single switching manifold (see Section 2). 
Following \cite{Shaikh}, the control needle variation analysis is performed in two distinct cases. In the first case, the variation is applied after the optimal switching time, therefore there is no state variation propagation along the state trajectory before the switching manifold,  while in the second case, the control needle variation is applied  before the optimal switching time. In this case there exists a state variation propagation along the state trajectory which passes through  the switching manifold, see \cite{Shaikh} (see Figure \ref{11}).
\begin{figure}
\begin{center}
\hspace*{-4cm}\includegraphics[scale=.5]{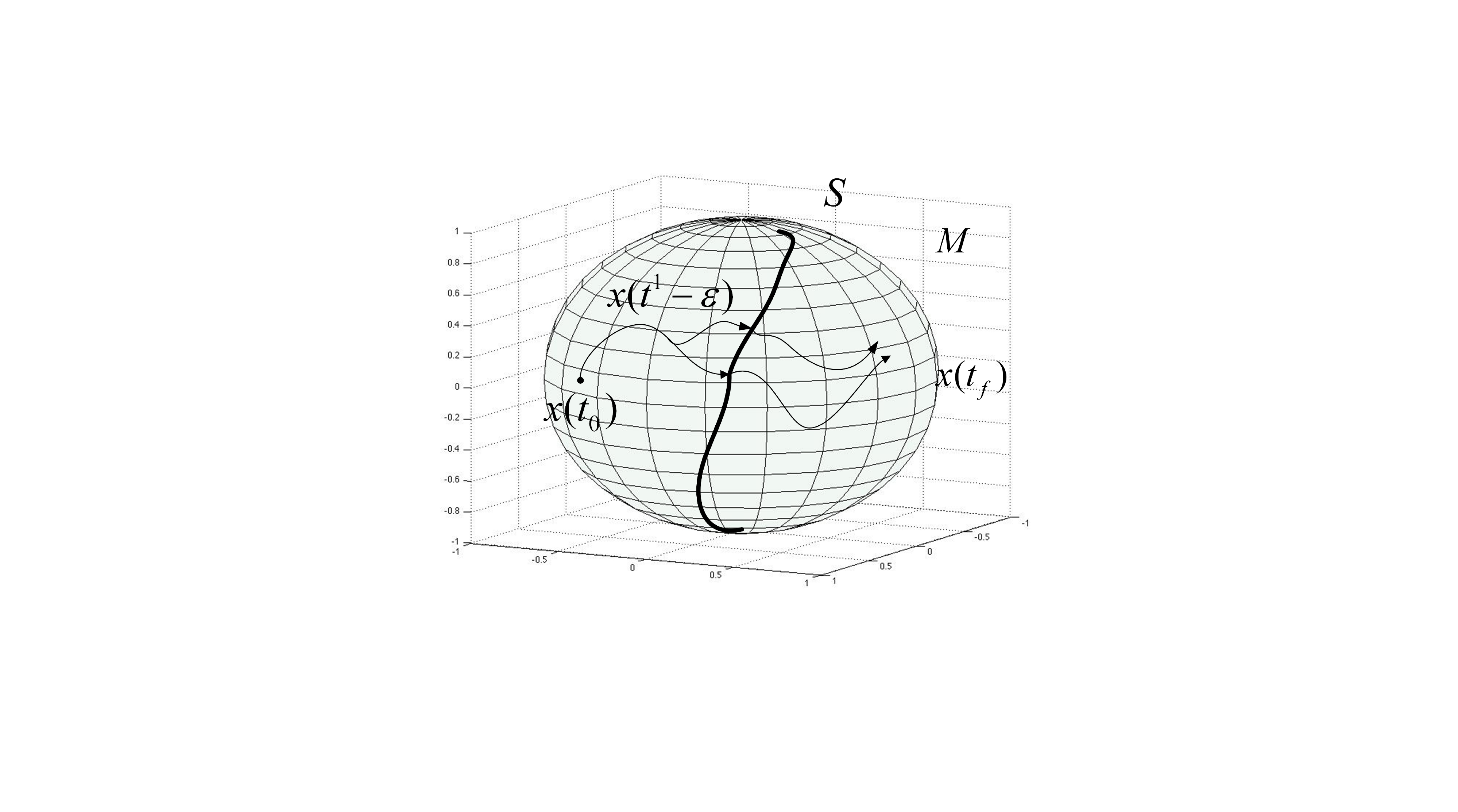}
\vspace*{-3cm}
      \caption{ Hybrid State Trajectory On the Sphere}
      \label{11}
      \end{center}
   \end{figure}  
\\
Recalling  assumption \textbf{\textit{A2}} in the Bolza problem and assuming the existence of optimal controls for each pair of given switching state and switching time, let us define  a function $v:\mathcal{M}\times (t_{0},t_{f})\rightarrow \mathds{R}$  for a hybrid system with one autonomous switching, i.e. $L=1$, as follows:
\EQ \label{v}v(x,t)=inf_{u\in \mathcal{U}}J(t_{0},t_{f},h_{0}, u),\EN
where 
\EQ x=\Phi^{(t^{-},t_{0})}_{f_{q_{0}}}(x_{0})\in \mathcal{S}\subset \mathcal{M}.\nnum\EN

\subsection{Non-Interior Optimal Switching States}
In this subsection, we show that the optimal switching state  for an MHOCP derived from a BHOCP (see (\ref{Mayer})) cannot be an interior point of  the \textit{attainable switching set} $\mathcal{A}(x_{0}, t_{s})\subset \mathcal{S},\hspace{.2cm}t_{0}<t_{s}<t_{f},$  for an MHOCP which is defined as
\EQ \mathcal{A}(x_{0}, t_{s})=\big\{x\in \mathcal{S}\hspace{.2cm}s.t.\hspace{.2cm}\exists u\in \mathcal{U}, \Phi^{(t^{-}_{s},t_{0})}_{f^{u}_{q_{0}}}(x_{0})=x\big\}.\nnum\EN 

Note that the state manifold of a Mayer problem derived from a Bolza problem is $\mathcal{M}_{B}\times \mathds{R}$ where $\mathcal{M}_{B}$ is the state manifold of the Bolza problem. In this paper, for simplicity and uniformity of notation,  the state manifold and the switching manifold of a Mayer problem shall also be denoted by $\mathcal{M}$ and $\mathcal{S}$ respectively. 
\begin{lemma}
\label{l4}
Consider an MHOCP derived from a BHOCP as in (\ref{Mayer}), (\ref{Mayer2})  with a single switching from the discrete state $q_{0}$ to the discrete state $q_{1}$ at the unique switching time $t_{s}$ on the optimal trajectory $(x^{o}(.), u^{o}(.))$ and an $n$ dimensional  switching manifold $\mathcal{S}=\mathcal{S}_{B}\times \mathds{R}:=n_{q_{0},q_{1}}$ defined in an $n+1$ dimensional manifold $\mathcal{M}=\mathcal{M}_{B}\times\mathds{R}$, where $\mathcal{S}_{B}\subset \mathcal{M}_{B}$ is the switching manifold of the BHOCP. Then an optimal switching state $x^{o}(t^{-}_{s})\in \mathcal{S}$ at the optimal switching time $t_{s}$ cannot be an interior point of $\mathcal{A}(x_{0}, t_{s})$ in the induced topology of  $\mathcal{S}$ from $\mathcal{M}$. 
\end{lemma}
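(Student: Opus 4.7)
The plan is a proof by contradiction that exploits the product structure $\mathcal{M} = \mathcal{M}_{B}\times \mathds{R}$ and $\mathcal{S} = \mathcal{S}_{B}\times \mathds{R}$ induced by the Bolza-to-Mayer lift (\ref{Mayer}), together with the structural fact that the augmented dynamics are block-triangular: $\dot{x} = f(x,u)$ does not depend on the auxiliary cost coordinate $x_{n+1}$, and $\dot{x}_{n+1} = l(x,u)$ is a pure integrator of the running cost.

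First, I would suppose toward contradiction that the optimal switching state $x^{o}(t_{s}^{-}) = (x^{o}_{B}(t_{s}^{-}),\, x^{o}_{n+1}(t_{s}^{-}))$ lies in the interior of $\mathcal{A}(x_{0},t_{s})$ in the induced topology of $\mathcal{S}$. Because of the product structure, any relatively open neighbourhood of $x^{o}(t_{s}^{-})$ in $\mathcal{S}$ contains a point $\tilde{x}(t_{s}^{-}) := (x^{o}_{B}(t_{s}^{-}),\, x^{o}_{n+1}(t_{s}^{-}) - \delta)$ for some sufficiently small $\delta>0$. By definition of $\mathcal{A}(x_{0},t_{s})$, there is an admissible control $\tilde{u}\in\mathcal{U}$ on $[t_{0},t_{s})$ whose lifted trajectory from $(x_{0},0)$ reaches $\tilde{x}(t_{s}^{-})$ at time $t_{s}$; equivalently, $\tilde{u}$ steers the Bolza component of the state to the same switching point $x^{o}_{B}(t_{s}^{-})$ as $u^{o}$ does, but with strictly smaller accumulated running cost.

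Next, I would concatenate $\tilde{u}$ on $[t_{0},t_{s})$ with $u^{o}$ on $[t_{s},t_{f}]$. The lifted jump in the Mayer problem acts as $\hat{\zeta}(x,x_{n+1}) = (\zeta(x), x_{n+1})$, since the accumulated running cost is not altered by a Bolza state jump, so the post-switch trajectory starts at the same point of $\mathcal{M}_{B}$ as the optimal one but with $x_{n+1}$ lower by $\delta$. The block-triangular structure of (\ref{Mayer}) then forces $\tilde{x}_{B}(t) = x^{o}_{B}(t)$ for all $t\in[t_{s},t_{f}]$, while $\tilde{x}_{n+1}(t) = x^{o}_{n+1}(t) - \delta$. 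Invoking (\ref{Mayer2}) we obtain $\hat{h}(\tilde{x}(t_{f})) = \hat{h}(x^{o}(t_{f})) - \delta < \hat{h}(x^{o}(t_{f}))$, contradicting the optimality of $u^{o}$.

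The main conceptual step — and the only real obstacle — is the choice of perturbation direction: one must confirm that within a relative neighbourhood of $x^{o}(t_{s}^{-})$ in $\mathcal{S}$ there exists an attainable point for which (i) the Bolza component of the switching state is unchanged and (ii) the $x_{n+1}$ component is strictly smaller. Both points follow from the product decomposition $\mathcal{S}=\mathcal{S}_{B}\times \mathds{R}$, since the $\mathds{R}$-fibre direction is tangent to $\mathcal{S}$ and relative interiority yields an open interval of attainable $x_{n+1}$ values at the optimal $x^{o}_{B}(t_{s}^{-})$. Once this structural observation is secured, the remainder of the argument is a routine cost comparison.
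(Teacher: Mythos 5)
Your proposal is correct and follows essentially the same route as the paper's proof: assume interiority, use the product structure $\mathcal{S}=\mathcal{S}_{B}\times\mathds{R}$ to find an attainable switching state with identical Bolza coordinates but strictly smaller cost coordinate, concatenate with $u^{o}$ after the switch, and derive a cost contradiction. Your explicit observation that the lifted jump acts as $\hat{\zeta}(x,x_{n+1})=(\zeta(x),x_{n+1})$ is a detail the paper leaves implicit, but it does not change the argument.
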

\begin{proof} If $\mathcal{A}(x_{0}, t_{s})$ has empty interior in the topology induced on $\mathcal{S}$ from $\mathcal{M}$ the result is immediate. 
 Assume $x^{o}(t^{-}_{s})$ is an interior point of $\mathcal{A}(x_{0}, t_{s})$, i.e. there exists an open neighbourhood $B_{x^{o}(t^{-}_{s})}\subset \mathcal{A}(x_{0}, t_{s})$ of $x^{o}(t^{-}_{s})\in \mathcal{S}$.  Let us denote a coordinate system around $x^{o}(t^{-}_{s})$ by $(x^{o}_{1},...,x^{o}_{n+1}),$ where $x^{o}_{n+1}$ corresponds to the running cost of the Bolza problem, see (\ref{Mayer}).
Since the switching manifold of the MHOCP is defined by $\mathcal{S}=\mathcal{S}_{B}\times \mathds{R}$,  we may choose a neighbourhood $B_{x^{o}(t^{-}_{s})}$ of $x^{o}(t^{-}_{s})$ in the induced topology of $\mathcal{S}$ with the last coordinate $x_{n+1}$ free to vary in an open set in $\mathds{R}$. Hence fixing $x^{o}_{1}(t^{-}_{s}),...,x^{o}_{n}(t^{-}_{s})$, there exists $y\in B_{x^{o}(t^{-}_{s})}$ such that 
\EQ y_{i}=x^{o}_{i}(t^{-}_{s}),\hspace{.2cm} i=1,...,n,\quad y_{n+1}<x^{o}_{n+1}(t^{-}_{s}),\nnum\EN
which is accessible  by $f_{q_{0}}$ subject to a new control $\hat{u}(t),\hspace{.2cm}t_{0}\leq t< t_{s}$, where $\hat{u}$ is not necessarily equal to $u^{o}$.  Set the control $u(t)=u^{o}(t),\hspace{.2cm} t_{s}\leq t\leq t_{f}$; then $u(.)$ results in  an identical state trajectory on $[t_{s},t_{f}]$ for the Bolza problem (since the variables $x_{1},...,x_{n}$ do not change). However, the final hybrid cost corresponding to the new switching state $y$ is
\EQ J(t_{0},t_{f},(x_{0},q_{0});1,(u,u^{o}))=y_{n+1}+\int^{t_{f}}_{t_{s}}l_{1}(x^{o}(t),u^{o}(t))dt+h(x^{o}(t_{f}))<v(x^{o}(t^{-}_{s}),t_{s}),\nnum\EN
where $y_{n+1}=\int^{t_{s}}_{t_{0}}l_{0}(\hat{x}(t),\hat{u}(t))dt<x^{o}_{n+1}=\int^{t_{s}}_{t_{0}}l_{0}(x^{o}(t),u^{o}(t))dt$, 
 contradicting the optimality of $x^{o}(t^{-}_{s})$; we conclude  $x^{o}(t^{-}_{s})$ lies on the boundary of $\mathcal{A}(x_{0}, t_{s})$.
 \end{proof}

However the lemma above implies that the hybrid value function defined by (\ref{v}) cannot be differentiated in all directions  at the optimal switching state for MHOCPs derived from BHOCPs. Hence the main HMP Theorem \ref{t2} for MHOCPs below applies in potential to all MHOCPs derived from BHOCPs. The general HMP statement given below employs a differential form $dN_{x}$ corresponding to the normal vector to the switching manifold $\mathcal{S}\subset\mathcal{}M$ at the optimal switching state $x^{o}(t_{s})$. Now in the special case where the value function can be differentiated in all directions at $x^{o}(t_{s})\in \mathcal{S}$, it may be shown that $dN_{x^{o}(t_{s})}=\mu dv(x^{o}(t_{s}),t_{s})$ for some scalar $\mu$, see \cite{link}, Lemma A.1; this fact has significant implications for the theory of HMP as is shown in \cite{Taringoo1, Taringoo2, Taringoo3}.

\subsection{Preliminary Lemmas}
In order to use the methods  introduced in \cite{Agra, Barbero, Lee}, we establish Lemma \ref{44} using the perturbed control $u_{\pi}(.,.)$ and the associated state variation at the final state $x^{o}(t_{f})$.
 Denote by $t_{s}(\epsilon)$ the switching time corresponding to $u_{\pi}(t,\epsilon)$. Note that, in general, $\Phi^{(t,t_{0})}_{\pi, f_{q}}(x_{0})$ does  not necessarily intersect the switching manifold at $t_{s}$. Hence, we introduce the following perturbed control to guarantee that eventually the state trajectory meets the switching manifold.
 \EQ \label{u} u_{\pi}(t,\epsilon)=\left\{ \begin{array}{cc} \quad \hspace{-1cm}u^{o}(t) \quad \hspace{.2cm}t\leq t^{1}-\epsilon\\ \hspace{.2cm}u_{1}\quad  \hspace{.6cm}t^{1}-\epsilon\leq t\leq t^{1}\\ \hspace{-.5cm}u^{o}(t) \quad \hspace{.2cm}t^{1}< t \leq t_{s}\\u^{o}(t_{s})\quad t_{s}\leq t < t_{s}(\epsilon) \end{array}\right.,\EN
 The following lemma shows that under the control above, the hybrid state trajectory always intersects the switching manifold for sufficiently small $\epsilon\in \mathds{R}^{+}$.
 \begin{lemma}
 \label{44}
 For an MHOCP  satisfying \textbf{\textit{A1-A3}} with a single switching from the discrete state $q_{0}$ to the discrete state $q_{1}$ at the unique switching time $t_{s}$ on the optimal trajectory $(x^{o}(.),u^{o}(.))$,  the state trajectory associated to the control needle variation  $u_{\pi}(t,\epsilon)$ in (\ref{u}) intersects the $n-1$ dimensional switching manifold $\mathcal{S}\subset \mathcal{M}$ for all sufficiently small $\epsilon\in \mathds{R}^{+}$ and the corresponding  switching time $t_{s}(\epsilon)$ is differentiable with respect to $\epsilon$.
 
  \end{lemma}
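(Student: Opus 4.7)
The plan is to set up a one-sided implicit function argument for the hitting time of the switching manifold, with transversality playing the role of the usual non-degeneracy hypothesis.

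First I would fix a local defining function $n_{q_0,q_1}$ for the codimension one embedded submanifold $\mathcal{S}$ on an open neighbourhood $V$ of $x^o(t_s^-)$ in $\mathcal{M}$, so that $\mathcal{S}\cap V = \{x\in V: n_{q_0,q_1}(x)=0\}$ with $dn_{q_0,q_1}|_{x^o(t_s^-)}\neq 0$. Define the scalar test function
\begin{equation*}
G(\epsilon,t) := n_{q_0,q_1}\bigl(\Phi_{\pi,f_{q_0}}^{(t,t_0),x_0}(\epsilon)\bigr),
\end{equation*}
well-defined for $(\epsilon,t)$ in a neighbourhood of $(0,t_s)$, and observe that $G(0,t_s)=0$ since $x^o(t_s^-)\in\mathcal{S}$. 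The transversality assumption built into Definition \ref{d0} gives
\begin{equation*}
\partial_t G(0,t_s) = \bigl\langle dn_{q_0,q_1}|_{x^o(t_s^-)},\, f_{q_0}(x^o(t_s^-),u^o(t_s^-))\bigr\rangle \neq 0.
\end{equation*}

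Next I would argue existence of the hitting time $t_s(\epsilon)$ for small $\epsilon>0$. Standard continuous dependence of ODE solutions on parameters (applied piecewise across the needle window $[t^1-\epsilon,t^1]$ and the smooth pieces before and after) yields that $\Phi_{\pi,f_{q_0}}^{(t,t_0),x_0}(\epsilon)$ is jointly continuous in $(\epsilon,t)$ on a neighbourhood of $(0,t_s)$, and hence so is $G$. Since $G(0,\cdot)$ changes sign strictly at $t_s$ by the nonzero derivative above, a standard intermediate value and uniqueness argument (using the strict sign of $\partial_t G$ throughout a smaller neighbourhood, secured by continuity) produces, for every sufficiently small $\epsilon\ge 0$, a unique $t_s(\epsilon)$ near $t_s$ with $G(\epsilon,t_s(\epsilon))=0$, and this function is continuous in $\epsilon$ at $0$.

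To upgrade continuity of $t_s(\epsilon)$ to right-differentiability at $\epsilon=0$ I would invoke Lemma \ref{l2} and Proposition \ref{p2}: the curve $\epsilon\mapsto \Phi_{\pi,f_{q_0}}^{(t_s,t_0),x_0}(\epsilon)$ is differentiable from the right at $\epsilon=0$ with tangent vector $T\Phi_{f_{q_0}}^{(t_s,t^1)}\bigl[f_{q_0}(x^o(t^1),u_1)-f_{q_0}(x^o(t^1),u^o(t^1))\bigr]$, and for $\epsilon>0$ small the dependence of the flow on $(\epsilon,t)$ is jointly $C^1$ since the needle window is then entirely to the left of $t_s$ and the remaining dynamics are smooth. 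Combining these yields that $G$ has a right derivative in $\epsilon$ at $(0,t_s)$ and is $C^1$ for $\epsilon>0$ small; together with $\partial_t G(0,t_s)\neq 0$, a one-sided implicit function theorem (proved directly: expand $G(\epsilon,t_s+\tau)=\partial_t G(0,t_s)\tau + \partial_\epsilon G(0^+,t_s)\epsilon + o(|\tau|+|\epsilon|)$ and solve for $\tau=\tau(\epsilon)$) gives
\begin{equation*}
\frac{d\,t_s}{d\epsilon}\bigg|_{\epsilon=0^+} = -\frac{\partial_\epsilon G(0^+,t_s)}{\partial_t G(0,t_s)},
\end{equation*}
which is the required right-differentiability. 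For $\epsilon>0$, standard $C^1$ dependence of the trajectory plus the implicit function theorem yields differentiability in the usual sense.

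I expect the main obstacle to be bookkeeping around the one-sidedness at $\epsilon=0$ and making the implicit function step rigorous in that setting, rather than any deeper geometric issue; in particular I would need to confirm that the needle-variation trajectory, which in general is only right-differentiable at $\epsilon=0$ because the perturbation window collapses there, suffices once paired with the transversality-driven non-degeneracy $\partial_t G(0,t_s)\neq 0$. The final step $t_s(\epsilon)\to t_s$ as $\epsilon\downarrow 0$ and the formula above together give the conclusion of the lemma.
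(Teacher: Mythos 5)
Your proposal is correct and follows essentially the same route as the paper's proof in Appendix~\ref{A0}: the paper uses the Rank Theorem to put $\mathcal{S}$ locally as $\{x_n=0\}$ and works with $\Psi(\epsilon,t)=x_n\circ\Phi^{(t,t^1)}_{\pi,f_{q_0}}(x(t^1))$, which is exactly your $G$ with the defining function taken to be the last coordinate, and then argues existence via the sign change across $\mathcal{S}$, differentiability in $\epsilon$ via Lemma~\ref{l2} and Proposition~\ref{p2}, and differentiability of $t_s(\epsilon)$ via transversality plus the Implicit Function Theorem. Your extra care about the one-sidedness at $\epsilon=0$ is a reasonable refinement of a point the paper treats somewhat informally, but it does not constitute a different method.
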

  
  \begin{proof}
  The proof is given in Appendix \ref{A0}.
  \end{proof}
 \\
 


 \begin{lemma}
 \label{l555}
 For an MHOCP satisfying hypotheses  \textbf{\textit{A1-A3}} with a single switching from the discrete state $q_{0}$ to the discrete state $q_{1}$ at the unique switching time $t_{s}$ on the optimal trajectory $(x^{o}(.),u^{o}(.))$, the state variation at the switching time 
 $t_{s}$, i.e. $\frac{d \Phi_{\pi, f_{q_{1}}}^{(t_{s}(\epsilon),t^{1}),x} }{d\epsilon}|_{\epsilon=0}$, is given as follows:
 \EQ\frac{d \Phi_{\pi, f_{q_{1}}}^{(t_{s}(\epsilon),t^{1}),x} }{d\epsilon}|_{\epsilon=0}&\hspace{-.1cm}=&\hspace{-.1cm}T\zeta\circ T\Phi^{(t^{-}_{s},t^{1})}_{f_{q_{0}}}[f_{q_{0}}(x^{o}(t^{1}),u_{1})-f_{q_{0}}(x^{o}(t^{1}),u^{o}(t^{1}))]\nnum\\&&\hspace{-.4cm}+(\frac{d t_{s}(\epsilon)}{d \epsilon}|_{\epsilon=0}).\Big(T\zeta\big[f_{q_{0}}(x^{o}(t^{-}_{s}),u^{o}(t^{-}_{s}))\big]-f_{q_{1}}(x^{o}(t_{s}),u^{o}(t_{s}))\Big),\nnum\\&&\hspace{7.3cm}t^{1}\in [t_{0},t_{s}).\nnum\EN
\end{lemma}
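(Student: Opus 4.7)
The plan is to decompose the variation into a propagation contribution from the needle perturbation up to the switching manifold and a switching-time correction accounting for $t_s(\epsilon)\neq t_s$. Let $y(\epsilon):=\Phi^{(t_s,t^1)}_{f^{u_\pi(\cdot,\epsilon)}_{q_0}}(x^o(t^1))$ denote the perturbed pre-switch state at the \emph{nominal} switching time $t_s$, and let $\xi(\epsilon):=\Phi^{(t_s(\epsilon),t_s)}_{f^{u^o(t_s)}_{q_0}}(y(\epsilon))$ denote the pre-switch state at the \emph{perturbed} switching time; this factorization is legitimate because by (\ref{u}) the control on $[t_s,t_s(\epsilon)]$ is the constant $u^o(t_s)$. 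The object $\Phi^{(t_s(\epsilon),t^1),x}_{\pi,f_{q_1}}(\epsilon)$ in the lemma is to be read as the post-jump perturbed state $\zeta(\xi(\epsilon))$ flowed back along $f^{u^o}_{q_1}$ to the nominal switching time $t_s$, so that nominal and perturbed trajectories may be compared in the common tangent space $T_{x^o(t_s)}\mathcal{M}$, namely $\tilde{\eta}(\epsilon):=\Phi^{(t_s,t_s(\epsilon))}_{f^{u^o}_{q_1}}(\zeta(\xi(\epsilon)))$.

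Next I compute the two intermediate derivatives. Since the needle is confined to $[t^1-\epsilon,t^1]$ and the dynamics on $[t^1,t_s]$ are governed by the unperturbed $f^{u^o}_{q_0}$, combining Proposition \ref{p2} with Lemma \ref{l1} yields
\EQ \dot{y}(0)=T\Phi^{(t_s^-,t^1)}_{f_{q_0}}\bigl[f_{q_0}(x^o(t^1),u_1)-f_{q_0}(x^o(t^1),u^o(t^1))\bigr].\nnum\EN
Differentiating $\xi$ by the chain rule, using the differentiability of $t_s(\cdot)$ from Lemma \ref{44} and the elementary identity $\partial_s\Phi^{(s,t_s)}_{f^{u^o(t_s)}_{q_0}}(x^o(t_s^-))|_{s=t_s}=f_{q_0}(x^o(t_s^-),u^o(t_s))$, one gets
\EQ \dot{\xi}(0)=\dot{t}_s(0)\,f_{q_0}(x^o(t_s^-),u^o(t_s^-))+\dot{y}(0),\nnum\EN
where the representative of $u^o$ is chosen so that $u^o(t_s)=u^o(t_s^-)$.

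Finally I apply $T\zeta$ to carry $\dot{\xi}(0)$ across the jump and differentiate $\tilde{\eta}$. Using the general identity $\partial_s\Phi^{(t,s)}_{f}(z)|_{s=t}=-f(z,u^o(t))$, which follows from $\Phi^{(t,t)}_{f}=\mathrm{id}$ by differentiating $\Phi^{(t,s)}_{f}\circ\Phi^{(s,t)}_{f}=\mathrm{id}$ along the diagonal, one obtains
\EQQ
\dot{\tilde{\eta}}(0) &=& -\dot{t}_s(0)\,f_{q_1}(x^o(t_s),u^o(t_s))+T\zeta\cdot\dot{\xi}(0)\\
&=& T\zeta\circ T\Phi^{(t_s^-,t^1)}_{f_{q_0}}\bigl[f_{q_0}(x^o(t^1),u_1)-f_{q_0}(x^o(t^1),u^o(t^1))\bigr]\\
&&+\dot{t}_s(0)\bigl(T\zeta[f_{q_0}(x^o(t_s^-),u^o(t_s^-))]-f_{q_1}(x^o(t_s),u^o(t_s))\bigr),
\ENN
which is the claimed identity.

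The main technical point I expect to grapple with is the one-sided nature of the limits: all derivatives above are right-derivatives at $\epsilon=0^+$ (Lemma \ref{l2}, Lemma \ref{44}), and one must commit to a representative of $u^o$ at $t_s$ so that the value $u^o(t_s)$ prescribed in (\ref{u}) agrees with the left-limit $u^o(t_s^-)$ appearing in the final formula; this is consistent with the Lebesgue-point convention on $u^o$ adopted before Lemma \ref{l2}. Apart from this bookkeeping, the argument is a careful application of the chain rule combined with the geometric content of Proposition \ref{p2}.
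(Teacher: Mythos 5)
Your proposal is correct and follows essentially the same route as the paper's Appendix B proof: decompose the perturbed flow into the needle propagation up to $t_{s}$, the constant-control segment over $[t_{s},t_{s}(\epsilon)]$, and the jump $\zeta$, then invoke Lemma \ref{44} for $\dot{t}_{s}(0)$ and Proposition \ref{p2} for the propagated perturbation vector. The only difference is cosmetic — you execute the expansion via a chain rule on the three-fold composition and compare at $t_{s}$ by back-flowing under $f_{q_{1}}$, whereas the paper expands the integrals in local coordinates with the Mean Value Theorem and subtracts the nominal flow at $t_{s}(\epsilon)$; these agree to first order in $\epsilon$.
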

\begin{proof}
The proof is obtained by the differentiation of the state flow combination; it is given in Appendix \ref{A}.
\end{proof}
 
 The following lemma gives a variational inequality as a necessary condition for the minimality of the Mayer hybrid cost function $h(x(t_{f}))=J(t_{0},t_{f},x_{0},u)$ defined by (\ref{Mayer2}). This inequality enables us to construct an adjoint curve $\lambda\in T^{*}\mathcal{M}$ which satisfies the HMP equations.
 
In order to prove the following lemma  we use the Taylor expansion of a smooth function defined on a Riemannian manifold, see \cite{Alvarez} and \cite{Smith}. 
For a given smooth function $h:\mathcal{M}\rightarrow R$ and a vector field $X\in \mathfrak{X}(\mathcal{M}),$ where $\mathfrak{X}(\mathcal{M})$ defines the space of all smooth vector fields on $\mathcal{M}$, the Taylor expansion of $h$ around $p\in \mathcal{M}$ along a tangent vector $X_{p}\in T_{p}\mathcal{M}$ is given by (see \cite{Smith}):
\EQ \label{tay}\hspace{-0cm}h(exp_{p}\theta X_{p})&\hspace{-.1cm}=&\hspace{-.1cm}h(p)+\theta (\nabla_{X}h)(p)+...+\frac{\theta^{n-1}}{(n-1)!}\times(\nabla^{n-1}_{X}h)(p)\nnum\\&&+\hspace{-0cm}\frac{\theta^{n}}{(n-1)!}\int^{1}_{0}(1-t)^{n-1}(\nabla^{n}_{X}h)(exp_{p}t\theta X)dt,\quad 0<\theta<\theta^{*},\nnum\\\EN
where $exp_{p}\theta X_{p}$ is the geodesic emanating from $p\in \mathcal{M}$ with the velocity $ X_{p}\in T_{p}\mathcal{M}$, $X(p)=X_{p}$ and $\theta^{*}$ is the upper bound of the existence of geodesics on the Riemannian manifold $\mathcal{M}$. The existence of  $\theta^{*}$ is guaranteed by the fundamental theorem of existence and uniqueness of geodesics (see \cite{jost}).
In (\ref{tay}), $\nabla:\mathfrak{X}(\mathcal{M})\times \mathfrak{X}(\mathcal{M})\rightarrow\mathfrak{X}(\mathcal{M})$ is the \textit{Levi-Civita} connection on $\mathcal{M}$ which satisfies the following characteristic relations:

\EQ Xg_{\mathcal{M}}(Y,Z)=g_{\mathcal{M}}(\nabla_{X}Y,Z)+g_{\mathcal{M}}(Y,\nabla_{X}Z),\quad \forall X,Y,Z\in \mathfrak{X}(\mathcal{M}),\nnum\EN
\EQ  \label{levi} \hspace{-.3cm}(i): \nabla_{X}Y-\nabla_{Y}X=[X,Y],\hspace{.5cm}(ii): \nabla_{X}f=X(f)\quad \forall X,Y\in \mathfrak{X}(\mathcal{M}), f\in C^{\infty}(\mathcal{M}).\nnum\\\EN
Based on the fundamental theorem of existence of geodesics on $\mathcal{M}$ (see \cite{jost}), for each $v_{\pi}(t_{f})\in T_{x(t_{f})}\mathcal{M}$ there exists a geodesic  emanating from $x(t_{f})$ with the velocity $v_{\pi}(t_{f})$. 

  \begin{lemma}
\label{l6}
For an  MHOCP  satisfying  \textbf{\textit{A1-A3}} with a single switching from the discrete state $q_{0}$ to the discrete state $q_{1}$ at the unique switching time $t_{s}$ on the optimal trajectory $(x^{o}(.),u^{o}(.))$,
\EQ \label{inq}\langle dh(x^{o}(t_{f})),v_{\pi}(t_{f})\rangle\geq 0,\quad \forall v_{\pi}(t_{f})\in K_{t_{f}},\EN
where
\EQ K_{t_{f}}=K^{1}_{t_{f}}\cup K^{2}_{t_{f}},\EN
and where 
\EQ\label{cone1} K^{1}_{t_{f}}=\bigcup_{t_{s}\leq t\leq t_{f}}\bigcup_{u_{1}\in U}T\Phi^{(t_{f},t)}_{f_{q_{1}}}[f_{q_{0}}(x^{o}(t),u_{1})-f_{q_{0}}(x^{o}(t),u^{o}(t))]\subset T_{x^{o}(t_{f})}\mathcal{M},\nnum\\\EN
and
\EQ\label{cone11}&&\hspace{-.1cm} K^{2}_{t_{f}}=\hspace{-.1cm}\bigcup_{t_{0}< t<t_{s}}\bigcup_{u_{1}\in U}T\Phi^{(t_{f},t_{s})}_{f_{q_{1}}}\circ T\zeta\circ T\Phi^{(t^{-}_{s},t)}_{f_{q_{0}}}[f_{q_{0}}(x^{o}(t),u_{1})-f_{q_{0}}(x^{o}(t),u^{o}(t))] \nnum\\&&\hspace{.6cm}+\big(\frac{dt_{s}(\epsilon)}{d\epsilon}|_{\epsilon=0}\big)T\Phi^{(t_{f},t_{s})}_{f_{q_{1}}}\Big(T\zeta \big[f_{q_{0}}(x^{o}(t^{-}_{s}),u^{o}(t^{-}_{s}))\big]-f_{q_{1}}(x^{o}(t_{s}),u^{o}(t_{s}))\Big) \nnum\\&&\hspace{.6cm}\subset T_{x^{o}(t_{f})}\mathcal{M}.\EN
 \end{lemma}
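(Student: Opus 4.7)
The plan is to read the inequality as a first-order consequence of the optimality of $(x^o,u^o)$ for the Mayer problem: for any admissible one-parameter perturbation $u_\pi(\cdot,\epsilon)$ the terminal cost $h(x_\pi(t_f,\epsilon))$ must be at least $h(x^o(t_f))$, so any right derivative of $\epsilon\mapsto h(x_\pi(t_f,\epsilon))$ at $\epsilon=0$ is nonnegative. The cones $K^1_{t_f}$ and $K^2_{t_f}$ correspond to the two distinct mechanisms by which such a right derivative of the terminal state can be generated: a needle variation placed after the optimal switching time (leaving $t_s$ fixed) and one placed before it (which generically shifts $t_s$ and must be propagated through the jump $\zeta$).

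For the first case I would take a Lebesgue time $t\in[t_s,t_f]$ and $u_1\in U$ and apply the standard needle variation (\ref{needle}) entirely inside the $q_1$-phase. Since the pre-switch arc is untouched, the trajectory still meets $\mathcal{S}$ at the unchanged time $t_s$ with the same jump, and Proposition \ref{p2} together with Lemma \ref{l1} identify $\frac{d}{d\epsilon}x_\pi(t_f,\epsilon)|_{\epsilon=0^+}$ as $T\Phi^{(t_f,t)}_{f_{q_1}}[f_{q_1}(x^o(t),u_1)-f_{q_1}(x^o(t),u^o(t))]$, which is the generic element of $K^1_{t_f}$ (reading the inner vector field in (\ref{cone1}) as $f_{q_1}$ to match the post-switch dynamics). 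For the second case I would use the composite perturbation $u_\pi(\cdot,\epsilon)$ of (\ref{u}) together with Lemma \ref{44}, which ensures that the perturbed trajectory still crosses $\mathcal{S}$ and that $t_s(\epsilon)$ is differentiable at $\epsilon=0$. Lemma \ref{l555} then supplies $\frac{d}{d\epsilon}\Phi^{(t_s(\epsilon),t^1),x}_{\pi,f_{q_1}}|_{\epsilon=0}$ as the sum of a propagated needle term and a switching-time-correction term; pushing this vector forward under $T\Phi^{(t_f,t_s)}_{f_{q_1}}$ and using the linearity of the tangent map produces the generic element of $K^2_{t_f}$ displayed in (\ref{cone11}).

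With the identification in place, I would close the argument by joining $x^o(t_f)$ to $x_\pi(t_f,\epsilon)$ along a smooth curve whose initial velocity is $v_\pi(t_f)$, for instance the geodesic emanating from $x^o(t_f)$, and applying the Riemannian Taylor expansion (\ref{tay}) with $n=1$ and $\nabla_X h=\langle dh,X\rangle$ to obtain $h(x_\pi(t_f,\epsilon))-h(x^o(t_f))=\epsilon\langle dh(x^o(t_f)),v_\pi(t_f)\rangle+o(\epsilon)$. Dividing by $\epsilon>0$, invoking $h(x_\pi(t_f,\epsilon))\geq h(x^o(t_f))$ and letting $\epsilon\downarrow 0$ then yields (\ref{inq}) on each of the two cones, hence on $K_{t_f}=K^1_{t_f}\cup K^2_{t_f}$.

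The step I expect to be the hardest is the second case, where the perturbed flow passes through both the smooth jump $\zeta$ and a switching time that itself depends on $\epsilon$. Verifying that the limit defining $\frac{d}{d\epsilon}(h\circ x_\pi(t_f,\cdot))|_{\epsilon=0^+}$ really equals $\langle dh(x^o(t_f)),v_\pi(t_f)\rangle$ requires a careful chain-rule argument that combines the one-sided differentiability of $t_s(\epsilon)$ from Lemma \ref{44}, the smoothness of $\zeta$, and the tangent-lift identities of Lemma \ref{l1}, in order to justify interchanging the $\epsilon$-differentiation with the composition of flows so that the geodesic Taylor expansion is actually valid along the chosen connecting curve.
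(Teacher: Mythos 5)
Your proposal is correct and follows essentially the same route as the paper: the same two-case decomposition (needle variation after $t_s$ handled by Lemma \ref{l2} and Proposition \ref{p2}, needle variation before $t_s$ handled by Lemmas \ref{44} and \ref{l555} with the switching-time-correction term), closed by the same Riemannian Taylor expansion of $h$ along the geodesic in the direction $v_{\pi}(t_f)$ together with the first-order agreement between that geodesic and the perturbed terminal state. Your parenthetical remark that the vector field inside $K^{1}_{t_f}$ should be read as $f_{q_1}$ is also consistent with how the paper actually uses $K^{1}_{t_f}$ in Step 1 of the proof of Theorem \ref{t2}.
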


\begin{proof}
To apply (\ref{tay}) to  $h$, one needs to extend $v_{\pi}(t_{f})\in T_{x(t_{f})}\mathcal{M}$ to a smooth vector field denoted by $\tilde{\mathcal{V}}_{\pi}\in \mathfrak{X}(\mathcal{M})$ such that $\tilde{\mathcal{V}}_{\pi}(x(t_{f}))=v_{\pi}(t_{f})$. It is  shown in \cite{Lee2} that this  extension always exists.

Employing  (\ref{tay})  on $h$ along $v_{\pi}(t_{f})$ and using the extended smooth vector field $\tilde{\mathcal{V}}_{\pi}\in \mathfrak{X}(\mathcal{M})$, we have
\EQ \label{rrr}h(exp_{x^{o}(t_{f})}\theta v_{\pi}(t_{f}))\hspace{0cm}=\hspace{0cm}h(x^{o}(t_{f}))+\theta (\nabla_{\tilde{\mathcal{V}}_{\pi}}h)(x^{o}(t_{f}))+o(\theta),  0<\theta< \theta^{*}.\nnum\\\EN

Here we show that $K_{t_{f}}$, defined in Lemma \ref{l6}, contains all the state perturbations at $t_{f}$.
 Lemma \ref{l2} and Proposition \ref{p2} together imply that \\$K^{1}_{t_{f}}=\bigcup_{t_{s}\leq t\leq t_{f}}\bigcup_{u_{1}\in U}T\Phi^{(t_{f},t)}_{f_{q_{1}}}[f_{q_{0}}(x^{o}(t),u_{1})-f_{q_{0}}(x^{o}(t),u^{o}(t))]$ contains all the state perturbations at $x(t_{f})$ for all the elementary control perturbations inserted after   $t_{s}$. 
For all the control perturbations applied at $t_{0}<t<t_{s}$, either $t_{s}(\epsilon)<t_{s}$ or $t_{s}\leq t_{s}(\epsilon)$, where $t_{s}(\epsilon)$ is the switching time corresponding to $u_{\pi}(t,\epsilon)$.

Following Lemmas \ref{l555} and \ref{44}, in a local chart around $x(t_{s})$, the differentiability of $t_{s}(\epsilon)$ with respect to $\epsilon$ implies
\EQ\frac{d \Phi_{\pi, f_{q_{1}}}^{(t_{s}(\epsilon),t^{1}),x} }{d\epsilon}|_{\epsilon=0}&\hspace{-.1cm}=&\hspace{-.1cm}T\zeta\circ T\Phi^{(t^{-}_{s},t^{1})}_{f_{q_{0}}}[f_{q_{0}}(x^{o}(t^{1}),u_{1})-f_{q_{0}}(x^{o}(t^{1}),u^{o}(t^{1}))]\nnum\\&&\hspace{-.4cm}+(\frac{d t_{s}(\epsilon)}{d \epsilon}|_{\epsilon=0})\Big(T\zeta\big[f_{q_{0}}(x^{o}(t^{-}_{s}),u^{o}(t^{-}_{s}))\big]-f_{q_{1}}(x^{o}(t_{s}),u^{o}(t_{s}))\Big)\nnum\\&&\hspace{-.4cm}\in T_{x^{o}(t_{s})}\mathcal{M},\nnum\EN
therefore
 \EQ &&\hspace{-0cm}K^{2}_{t_{f}}= \bigcup_{t_{0}< t<t_{s}}\bigcup_{u\in U}\big\{T\Phi^{(t_{f},t_{s})}_{f_{q_{1}}}\circ T\zeta\circ T\Phi^{(t^{-}_{s},t)}_{f_{q_{0}}}[f_{q_{0}}(x^{o}(t,u_{1}))-f_{q_{0}}(x^{o}(t),u^{o}(t))] \nnum\\&&\hspace{.6cm}+(\frac{dt_{s}(\epsilon)}{d\epsilon}|_{\epsilon=0})T\Phi^{(t_{f},t_{s})}_{f_{q_{1}}}\Big(T\zeta \big[f_{q_{0}}(x^{o}(t^{-}_{s}),u^{o}(t^{-}_{s}))\big]-f_{q_{1}}(x^{o}(t_{s}),u^{o}(t_{s}))\Big)\big\}\nnum\\&&\hspace{4.6cm}\subset T_{x^{o}(t_{f})}\mathcal{M}, t\in(t_{0},t_{s}),\nnum\EN
 contains all the state variations at $x^{o}(t_{f})$ corresponding to all elementary control perturbations at $t\in(t_{0},t_{s})$. 

Since $K_{t_{f}}$ contains all the state perturbations at  $x^{o}(t_{f})$,  choosing $v_{\pi}(t_{f})\in K_{t_{f}}\subset T_{x(t_{f})}\mathcal{M}$ implies that at least at one particular time, one particular elementary control variation $\big(u_{\pi(t^{1}(v_{\pi}), u_{1}(v_{\pi}))}(t,\epsilon),$ where $u_{1}(v_{\pi})$ is the needle control resulting in the control variation $u_{\pi}(t,\epsilon)$\big) results in the final state variation  $v_{\pi}(t_{f})\in K_{t_{f}}$. 

Note that choosing $\epsilon=\theta$,  $h(exp_{x^{o}(t_{f})}\theta v_{\pi}(t_{f}))$ and $h(x_{\epsilon}(t_{f}))$, where $x_{\epsilon}(t_{f})$ is the final state  curve obtained with respect to $\epsilon$ variation, are equal to first order since they have  the same first order derivative with respect to $\epsilon$.  By the construction of $u_{\pi}(t,\epsilon)\in \mathcal{U}$, $x_{\epsilon}(t_{f})$ is a curve in the reachable set of the hybrid system at $t_{f}$. The minimality of $x^{o}(t_{f})$ consequently implies that $h(x^{o}(t_{f}))\leq h(x_{\epsilon}(t_{f}))$; then $h(x_{\epsilon}(t_{f}))- h(exp_{x^{o}(t_{f})}\epsilon v_{\pi}(t_{f}))=o(\epsilon)$ together with (\ref{rrr}) implies

\EQ\label{rr} 0\leq(\nabla_{\tilde{\mathcal{V}}_{\pi}}h)(x^{o}(t_{f})),\quad \tilde{\mathcal{V}}_{\pi}(x^{o}(t_{f}))=v_{\pi}(t_{f}), \quad\forall v_{\pi}(t_{f})\in K_{t_{f}}.\EN

For the smooth function $h:\mathcal{M}\rightarrow R$, (\ref{levi}) (ii) implies 
\EQ \tilde{\mathcal{V}}_{\pi}(h)(x^{o}(t_{f}))=\big(\nabla_{\tilde{\mathcal{V}}_{\pi}}h\big)(x^{o}(t_{f}))=\sum^{n}_{i=1}v^{i}_{\pi}(t_{f})\frac{\partial h}{\partial x_{i}}(x^{o}(t_{f})),\nnum\EN
where the second equality uses local coordinates.
Therefore by the definition of $dh$ we have 
\EQ  \big(\nabla_{\tilde{\mathcal{V}}_{\pi}}h\big)(x^{o}(t_{f}))=\langle dh(x^{o}(t_{f})),v_{\pi}(t_{f})\rangle,\nnum\EN
which implies 
\EQ\langle dh(x^{o}(t_{f})),v_{\pi}(t_{f})\rangle\geq 0, \quad\forall v_{\pi}(t_{f})\in K_{t_{f}},\nnum \EN
and completes the proof.
\end{proof}\\\\

\subsection{Statement of the Hybrid Minimum Principle}

Generalizing the results for $\mathcal{M}=\mathds{R}^{n}$ in \cite{Shaikh}, we have the following theorem which gives the HMP for  autonomous hybrid systems with only one autonomous switching which occurs on the switching manifold $\mathcal{S}\subset \mathcal{M}$.

 For an MHOCP  with a single switching from the discrete state $q_{0}$ to the discrete state $q_{1}$ at the unique switching time $t_{s}$ on the optimal trajectory $(x^{o}(.),u^{o}(.))$, where the switching manifold is an $n-1$ dimensional oriented submanifold of $\mathcal{M}$, we have 
\EQ\label{kir1} \forall X\in T_{x}\mathcal{S},\hspace{.2cm}g_{\mathcal{M}}(N_{x},X)=0,\EN
where $N_{x}\in T^{\perp}_{x}\mathcal{S}\subset T_{x}\mathcal{M}$ is the normal vector at $x^{o}(t^{-}_{s})$ (the metric $g_{\mathcal{M}}$ is positive definite). For use below we define a one form $dN_{x}$, corresponding to $N_{x}$ by
\EQ dN_{x}:=g_{\mathcal{M}}(N_{x},.)\in T^{*}_{x}\mathcal{M},\EN
where the linearity of $dN_{x}$ follows from the bi-linearity of $g_{\mathcal{M}}$.

\begin{theorem}
\label{t2}
Consider an impulsive  MHOCP  satisfying hypotheses  \textbf{\textit{A1-A3}}. Then  corresponding to  an  optimal  control and optimal state trajectory, $u^{o}$ and $x^{o}$ with a single switching state at $(x^{o}(t_{s}),t_{s})$,  there exists a nontrivial adjoint trajectory $\lambda^{o}(.)=(x^{o}(.),p^{o}(.))\in T^{*}\mathcal{M},$ defined along the optimal state trajectory, such that:
\EQ \hspace{.6cm}H_{q_{i}}(x^{o}(t),p^{o}(t),u^{o}(t))\leq H_{q_{i}}(x^{o}(t),p^{o}(t),u_{1}), \hspace{.2cm}\forall u_{1}\in U, t\in[t_{0},t_{f}],\hspace{.2cm}i=0,1,\nnum \EN
and the corresponding optimal adjoint trajectory $\lambda^{o}(.)\in T^{*}\mathcal{M}$ satisfies:
\EQ \dot{\lambda^{o}}(t)=\overrightarrow{H}_{q_{i}}(\lambda^{o}(t)),\quad t\in [t_{0},t_{f}], i=0,1,\nnum\EN
for optimal switching state and switching time $(x^{o}({t_{s}}),t_{s})$, there exists $dN_{x}\in T^{*}_{x}\mathcal{M}$ such that 

\EQ \label{kkk}&&\hspace{-.5cm}p^{o}(t^{-}_{s})=T^{*}\zeta(p^{o}(t_{s}))+\mu dN_{x^{o}(t^{-}_{s})},\nnum\\&& \hspace{-.5cm} p^{o}(t^{-}_{s})\in T^{*}_{x^{o}(t^{-}_{s})}\mathcal{M},\quad p^{o}(t_{s})\in T^{*}_{x^{o}(t_{s})}\mathcal{M},\nnum\\&&\hspace{-.5cm} x^{o}(t_{s})=\zeta(x^{o}(t^{-}_{s})),\EN
\EQ \hspace{.5cm}x^{o}(t_{0})=x_{0},\hspace{.2cm} p^{o}(t_{f})=dh(x^{o}(t_{f}))\in T^{*}_{x^{o}(t_{f})}\mathcal{M},\hspace{.2cm} dh= \sum^{n}_{i=1}\frac{\partial h}{\partial x^{i}}dx^{i}\in T^{*}_{x}\mathcal{M},\EN
where $\mu\in \mathds{R}$ and $T^{*}\zeta:T^{*}\mathcal{M}\rightarrow T^{*}\mathcal{M}$.
The continuity of the Hamiltonian at $(x^{o}({t_{s}}),t_{s})$ is given as follows
\EQ\hspace{-.5cm} H_{q_{0}}(x^{o}(t^{-}_{s}),p^{o}(t^{-}_{s}),u^{o}(t^{-}_{s}))=H_{q_{1}}(x^{o}(t_{s}),p^{o}(t_{s}),u^{o}(t_{s})).\nnum\EN

\end{theorem}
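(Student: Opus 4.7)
The plan is to combine the variational inequality of Lemma~\ref{l6} with the invariance of the natural pairing under the tangent/cotangent flows (Proposition~\ref{p1}) in order to construct the adjoint trajectory $\lambda^{o}$ by backward propagation from the terminal condition $p^{o}(t_{f})=dh(x^{o}(t_{f}))$. I would handle the two arcs $[t_{s},t_{f}]$ and $[t_{0},t_{s})$ separately and glue them at $t_{s}$ via the transversality constraint that the perturbed trajectory still intersect $\mathcal{S}$.

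On the post-switch arc I would define $p^{o}$ on $[t_{s},t_{f}]$ by pulling $dh(x^{o}(t_{f}))$ backward along the cotangent flow of $f_{q_{1}}$; then $\dot\lambda^{o}=\overrightarrow{H}_{q_{1}}(\lambda^{o})$ holds by construction, and applying Lemma~\ref{l6} on the cone $K^{1}_{t_{f}}$, combined with Proposition~\ref{p1}, yields
\[
\langle p^{o}(t),\,f_{q_{1}}(x^{o}(t),u_{1})-f_{q_{1}}(x^{o}(t),u^{o}(t))\rangle\ge 0
\]
at every Lebesgue time $t\in[t_{s},t_{f}]$ and every $u_{1}\in U$, which is the Hamiltonian minimization for $q_{1}$.

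For the pre-switch arc I would exploit $K^{2}_{t_{f}}$. The switching-time derivative $\frac{dt_{s}(\epsilon)}{d\epsilon}|_{\epsilon=0}$ appearing in Lemma~\ref{l555} is pinned down by differentiating the local defining relation for $\mathcal{S}$ along the perturbed trajectory, yielding $\frac{dt_{s}(\epsilon)}{d\epsilon}|_{\epsilon=0}=-\langle dN_{x^{o}(t_{s}^{-})},\,v(t_{s}^{-})\rangle/\langle dN_{x^{o}(t_{s}^{-})},\,f_{q_{0}}(x^{o}(t_{s}^{-}),u^{o}(t_{s}^{-}))\rangle$, the denominator being nonzero by transversality. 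Substituting this into $\langle p^{o}(t_{s}),v(t_{s})\rangle\ge 0$ and using the push-forward/pull-back adjointness $\langle p^{o}(t_{s}),T\zeta(\cdot)\rangle=\langle T^{*}\zeta(p^{o}(t_{s})),\cdot\rangle$ reduces the inequality to
\[
\langle T^{*}\zeta(p^{o}(t_{s}))+\mu\,dN_{x^{o}(t_{s}^{-})},\,v(t_{s}^{-})\rangle\ge 0,\quad \mu:=-\frac{\langle p^{o}(t_{s}),\,T\zeta(f_{q_{0}})-f_{q_{1}}\rangle}{\langle dN_{x^{o}(t_{s}^{-})},\,f_{q_{0}}\rangle}.
\]
I would then \emph{define} $p^{o}(t_{s}^{-}):=T^{*}\zeta(p^{o}(t_{s}))+\mu\,dN_{x^{o}(t_{s}^{-})}$, which is exactly the jump formula~(\ref{kkk}), and propagate backward on $[t_{0},t_{s})$ via the cotangent flow of $f_{q_{0}}$; Proposition~\ref{p1} then converts the inequality into the Hamiltonian minimization for $q_{0}$.

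The continuity of the Hamiltonian is then an immediate algebraic consequence of the choice of $\mu$: pairing the jump formula with $f_{q_{0}}(x^{o}(t_{s}^{-}),u^{o}(t_{s}^{-}))$ gives $H_{q_{0}}(t_{s}^{-})=\langle p^{o}(t_{s}),T\zeta(f_{q_{0}})\rangle+\mu\langle dN,f_{q_{0}}\rangle=\langle p^{o}(t_{s}),f_{q_{1}}\rangle=H_{q_{1}}(t_{s})$, the cross term cancelling by the definition of $\mu$. The main obstacle I anticipate is the careful verification that the family of vectors $v(t_{s}^{-})$ obtained as $(t^{1},u_{1})$ ranges over the Lebesgue points of $[t_{0},t_{s})$ and over $U$ generates a cone of variations in $T_{x^{o}(t_{s}^{-})}\mathcal{M}$ rich enough to render $p^{o}(t_{s}^{-})$ uniquely determined modulo $dN_{x^{o}(t_{s}^{-})}$; the Lagrange multiplier term $\mu\,dN$ is exactly the ingredient that absorbs the non-interior boundary structure of the attainable set on $\mathcal{S}$ identified in Lemma~\ref{l4}, and the smooth dependence of $t_{s}(\epsilon)$ required to justify these manipulations is supplied by Lemma~\ref{44}.
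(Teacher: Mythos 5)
Your proposal follows essentially the same route as the paper's own proof: backward propagation of $dh(x^{o}(t_{f}))$ along the cotangent flows on each arc, use of Lemma \ref{l6} with the cones $K^{1}_{t_{f}}$ and $K^{2}_{t_{f}}$ together with Proposition \ref{p1}, determination of $\frac{dt_{s}(\epsilon)}{d\epsilon}|_{\epsilon=0}$ from the tangency of the propagated variation to $\mathcal{S}$ (with the same transversality denominator), the same multiplier $\mu$, the same definition of $p^{o}(t^{-}_{s})$ via the jump formula, and the same algebraic cancellation giving Hamiltonian continuity. The only inessential difference is that you state the variational inequality at $t_{s}$ rather than at $t_{f}$ and worry about uniqueness of $p^{o}(t^{-}_{s})$, which the theorem does not require; the argument is correct as it stands.
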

\begin{proof}
The proof is based on the control needle variation along the optimal state trajectory and employs the results of Lemma \ref{l6}; it is given in Appendix \ref{s1}.
\end{proof}\\
In the case where  $dim(\mathcal{S})<n-1$, the normal vector at the optimal switching state is not uniquely defined and (\ref{kkk}) becomes
\EQ\hspace{.8cm}p^{o}(t^{-}_{s})-T^{*}\zeta(p^{o}(t_{s}))\in T^{*^{\perp}}_{x^{0}(t^{-}_{s})}\mathcal{S}\hspace{.5cm} p^{o}(t^{-}_{s})\in T^{*}_{x^{o}(t^{-}_{s})}\mathcal{M},\quad p^{o}(t_{s})\in T^{*}_{x^{o}(t_{s})}\mathcal{M},\nnum\EN
where $T^{*^{\perp}}_{x}\mathcal{S}:=\{\alpha\in T^{*}_{x}\mathcal{M},\hspace{.3cm} s.t. \hspace{.2cm} \forall X\in T_{x}\mathcal{S},\langle\alpha,X\rangle=0\}$.
\\\\
\section{Simulation Results}
To illustrate the results above we consider an HOCP and employ the \textit{Gradient Geodesic-HMP} (GG-HMP) algorithm (see \cite{Taringoo4}). 

The HOCP is defined on a torus with the following parametrization:
\EQ &&\hspace{-.5cm}x(\zeta,w)=(R+rcos(w))cos(\zeta),\nnum\\&&\hspace{-.5cm} y(\zeta,w)=(R+rcos(w))sin(\zeta),\nnum\\ &&\hspace{-.5cm}z(\zeta,w)=rsin(w), w,\zeta\in [0,2\pi).\nnum\EN
where $R=1, r=0.5$. The induced Riemannian metric is given by
\EQ g_{T^{2}}(\zeta, w)=(R+rcos(w))^{2}d\zeta \otimes d\zeta+r^{2}dw \otimes dw.\nnum\EN
\begin{figure}
\begin{center}
\hspace*{-4cm}\includegraphics[scale=.51]{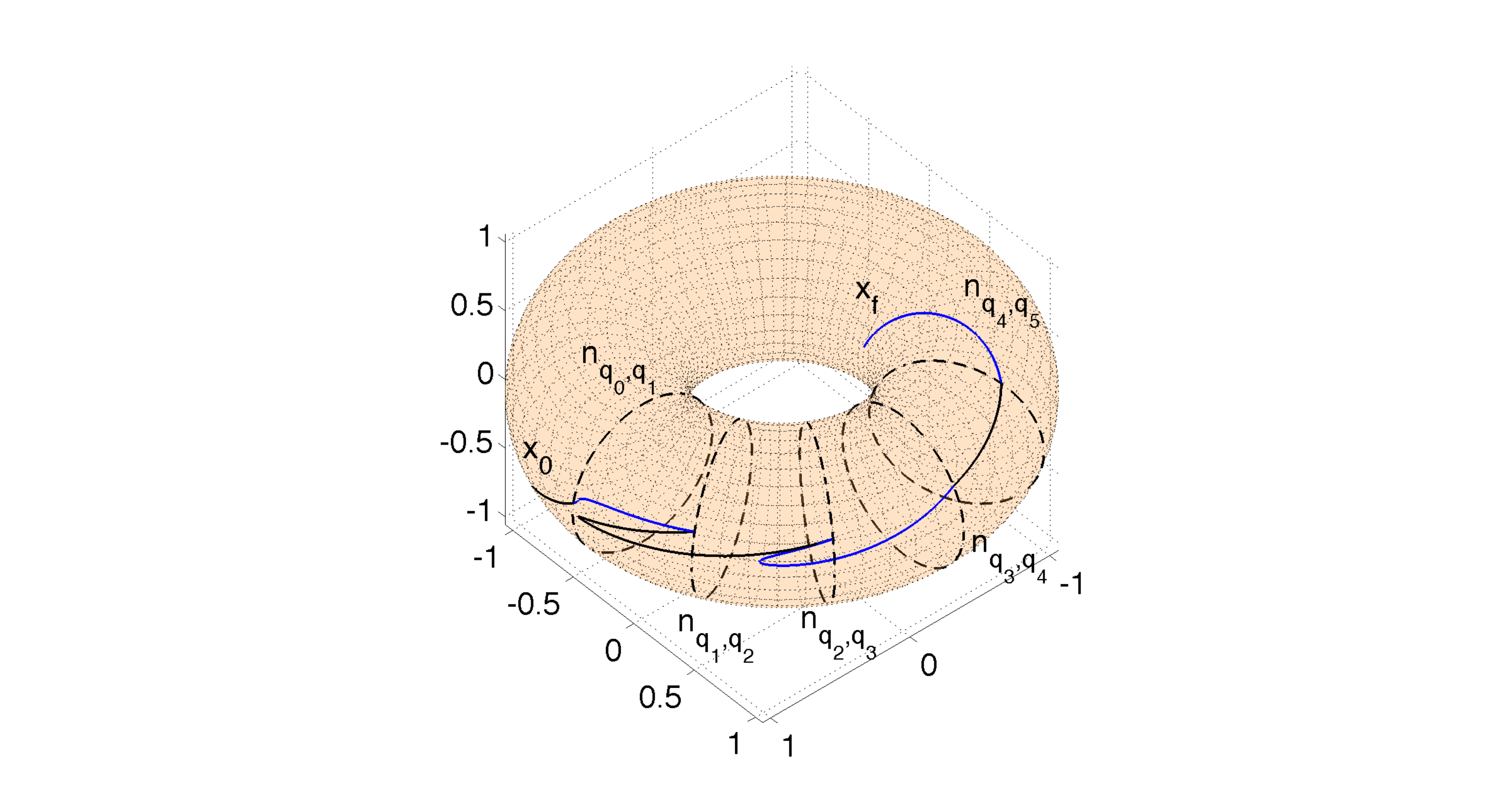}
\vspace{-1cm}
      \caption{ Hybrid State Trajectory On the Torus}
      \label{1222}
      \end{center}
   \end{figure}
\begin{figure}
\begin{center}
\hspace*{-1.25cm}\includegraphics[scale=.43]{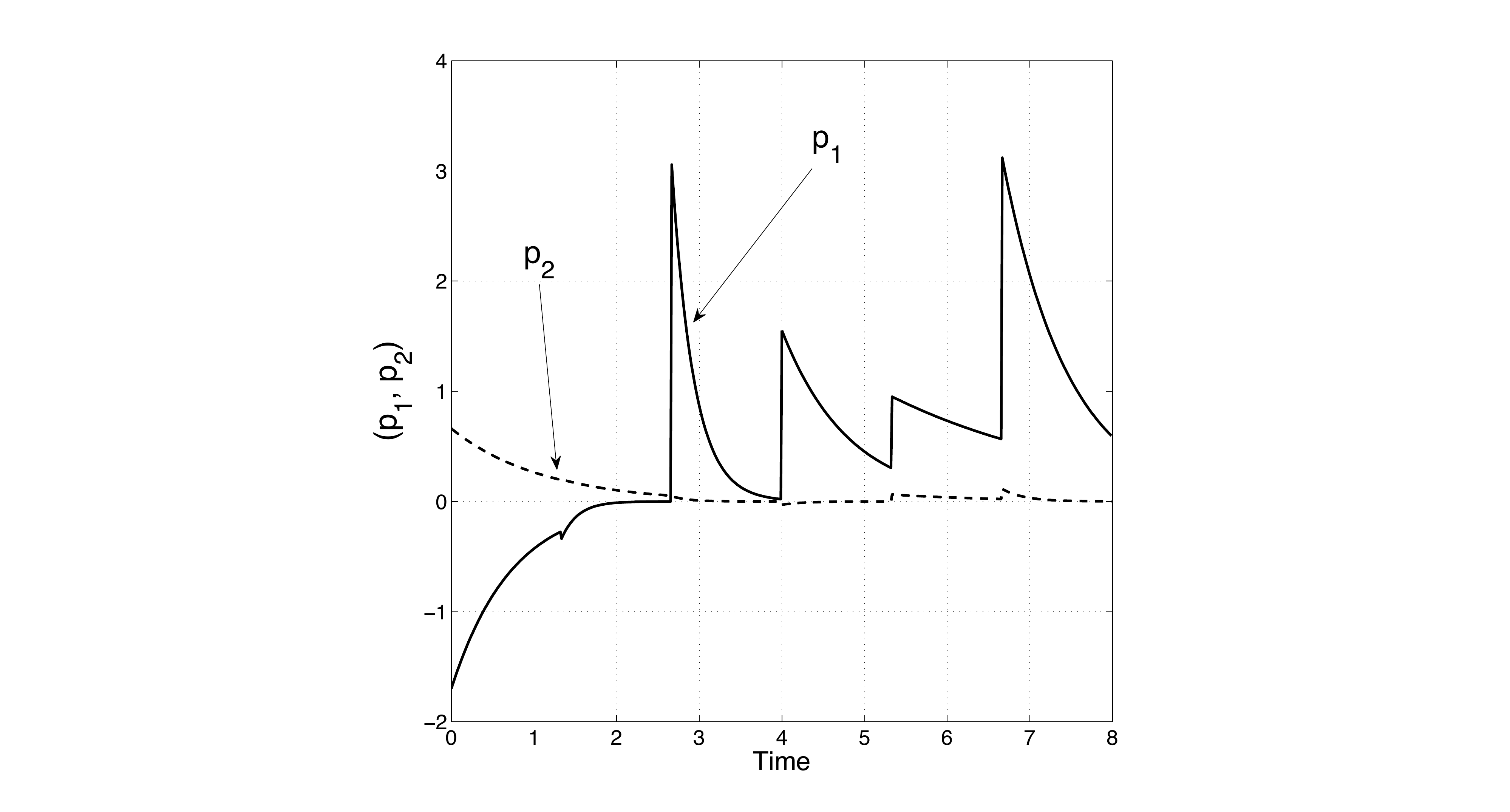}
      \caption{ Adjoint Processes }
      \label{555}
      \end{center}
   \end{figure}
\begin{figure}
\begin{center}
\hspace*{3cm}\includegraphics[scale=.43]{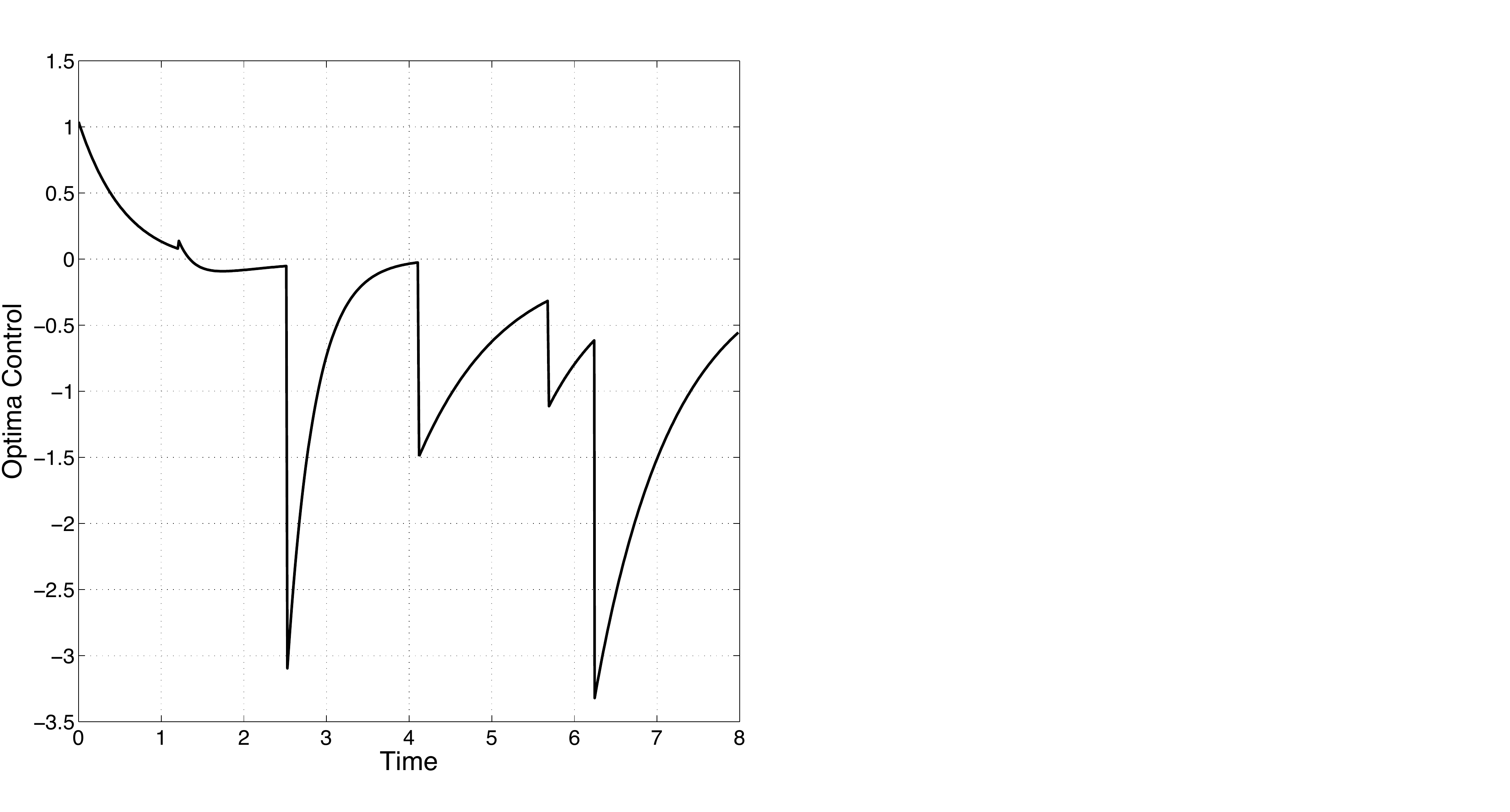}
      \caption{ Control Function}
      \label{5555}
      \end{center}
   \end{figure}

The hybrid system trajectory goes through each discrete state in numerical order and the dynamics are given in the local parametrization space of the torus $T^{2}$ as follows: 
 \EQ\hspace{.3cm} \textsl{q}_{0}\quad   \left(\begin{array}{cc}\dot{\zeta}\\\dot{w}\end{array}\right)=\left(\begin{array}{cc}1.5\quad 0\\0 \quad 1\end{array}\right)\left(\begin{array}{cc} \zeta\\ w \end{array}\right)+\left(\begin{array}{cc}1\\1\end{array}\right)u,\EN
\EQ \textsl{q}_{1}\quad   \left(\begin{array}{cc}\dot{\zeta}\\\dot{w}\end{array}\right)=\left(\begin{array}{cc}5\quad 0\\0 \quad 1\end{array}\right)\left(\begin{array}{cc}\zeta\\w \end{array}\right)+\left(\begin{array}{cc}1\\1\end{array}\right)u,\EN
\EQ \textsl{q}_{2}\quad   \left(\begin{array}{cc}\dot{\zeta}\\\dot{w}\end{array}\right)=\left(\begin{array}{cc}3\quad 0\\0 \quad 4\end{array}\right)\left(\begin{array}{cc}\zeta\\w \end{array}\right)+\left(\begin{array}{cc}1\\1\end{array}\right)u,\EN
\EQ \textsl{q}_{3}\quad   \left(\begin{array}{cc}\dot{\zeta}\\\dot{w}\end{array}\right)=\left(\begin{array}{cc}1\quad 0\\0 \quad 3\end{array}\right)\left(\begin{array}{cc}\zeta\\w \end{array}\right)+\left(\begin{array}{cc}1\\1\end{array}\right)u,\EN
\EQ \textsl{q}_{4}\quad   \left(\begin{array}{cc}\dot{\zeta}\\\dot{w}\end{array}\right)=\left(\begin{array}{cc}1\quad 0\\0 \quad 2\end{array}\right)\left(\begin{array}{cc}\zeta\\w \end{array}\right)+\left(\begin{array}{cc}1\\1\end{array}\right)u,\EN
\EQ \textsl{q}_{5}\quad   \left(\begin{array}{cc}\dot{\zeta}\\\dot{w}\end{array}\right)=\left(\begin{array}{cc}1\quad 0\\0 \quad 3\end{array}\right)\left(\begin{array}{cc}\zeta\\w \end{array}\right)+\left(\begin{array}{cc}1\\1\end{array}\right)u.\EN
The switching submanifolds and the cost function are defined as follows:
\EQ n_{q_{0},q_{1}}=\{0\leq w<2\pi, \zeta=0\},\hspace{,5cm} n_{q_{1},q_{2}}=\{0\leq w<2\pi, \zeta=\frac{\pi}{6}\},\EN
\EQ n_{q_{2},q_{3}}=\{0\leq w<2\pi, \zeta=\frac{\pi}{3}\},\hspace{.5cm} n_{q_{3},q_{4}}=\{0\leq w<2\pi, \zeta=\frac{\pi}{2}\},\EN
\EQ n_{q_{4},q_{5}}=\{0\leq w<2\pi, \zeta =\frac{2\pi}{3}\}, \hspace{.5cm}J=\frac{1}{2}\int^{8}_{0}u^{2}(t)dt,\EN
and the boundary conditions are given as:
\EQ x_{0}=(1.4117, -0.4367, -0.1478)\in \mathds{R}^{3},\hspace{.1cm}\\x_{f}=(-0.1478, -0.49980, 0.10130)\in \mathds{R}^{3}.\nnum\EN
The hamiltonian functions are given as 
\EQ H_{q_{i}}(\left(\begin{array}{cc}\zeta\\w\end{array}\right),p(t),u(t))=(p_{1}(t),p_{2}(t)) \left(\begin{array}{cc}\dot{\zeta}\\\dot{w}\end{array}\right)+\frac{1}{2}u^{2}(t),\hspace{.2cm} i=0,...,5,\hspace{.2cm} t\in[t_{i},t_{i+1}).\nnum\\\EN
The GG-HMP algorithm is an extension to Riemannian manifolds of the HMP algorithm introduced in \cite{Shaikh};
 this is done by introducing  a geodesic gradient flow algorithm on $\mathcal{S}$ and constructing an HMP algorithm   along geodesics on $\mathcal{S}$.
Figure \ref{1222} shows the state trajectory on the torus and Figure \ref{555} depicts the adjoint variable with the discontinuity at the optimal switching times \\$ \overline{t_{s}}=[ 1.2159, 2.5142, 4.0571, 5.6186, 6.3307] $.

\appendix
 \section{Proof of Lemma \ref{44}}
 \label{A0}
 \begin{proof}
 Since $\mathcal{S}$ is a smooth embedded submanifold of $\mathcal{M}$ the inclusion $i:\mathcal{S}\rightarrow \mathcal{M}$ is a topological embedding and hence its rank is constant (see \cite{Lee2}). By the Rank Theorem for Manifolds (see \cite{Lee3}), $i$ may be locally given as
 \EQ i(x_{1},...,x_{n-1})=(x_{1},...,x_{n-1},0).\nnum \EN 
 Hence, $\mathcal{S}$ is locally homeomorphic to $\mathds{R}^{n-1}$. As stated in Subsection \ref{ss1}, $\Phi^{(t_{s},t_{0})}_{\pi,f_{q_{0}}}(x_{0})$ converges to $x^{o}(t_{s})\in \mathcal{S}$ as $\epsilon\downarrow 0$ (see \cite{Piccoli}), therefore $\Phi^{(t_{s},t_{0})}_{\pi,f_{q_{0}}}(x_{0})$ converges into any neighbourhood of $x^{o}(t_{s})\in\mathcal{S}$ as $\epsilon\downarrow 0$. Let us denote the coordinate domain neighbourhood given by the Rank Theorem as $U_{x^{o}(t^{-}_{s})}$, where $x^{o}(t^{-}_{s})=\Phi^{(t^{-}_{s},t^{1})}_{f_{q_{0}}}(x(t^{1}))\in \mathcal{S}$. \\
  
 Consider $0<\delta t$ such that $\Phi^{(t_{s}+\delta t,t_{s})}_{f_{q_{0}}}(x^{o}(t^{-}_{s}))\in U_{x^{o}(t^{-}_{s})}$. In the local coordinate system around $x^{o}(t^{-}_{s})$ defined above, the switching manifold $\mathcal{S}$ separates $U_{x^{o}(t^{-}_{s})}$ into two subsets $U^{1}_{x^{o}(t^{-}_{s})}, U^{2}_{x^{o}(t^{-}_{s})}$, where $U^{1}_{x^{o}(t^{-}_{s})}=\{x\in U_{x^{o}(t^{-}_{s})}, x_{n}<0\}$ and $U^{2}_{x^{o}(t^{-}_{s})}=\{x\in U_{x^{o}(t^{-}_{s})}, x_{n}>0\}$. For definiteness, we assume that first, the state trajectory enters $U^{1}_{x^{o}(t^{-}_{s})}$ and second, it enters $U^{2}_{x^{o}(t^{-}_{s})}$ after meeting the switching manifold; therefore $\Phi^{(t_{s}+\delta t,t_{s})}_{f_{q_{0}}}(x^{o}(t^{-}_{s}))\in U^{2}_{x^{o}(t^{-}_{s})}$ for all sufficiently small $\delta t>0$.   The convergence of $\Phi^{(t_{s}+\delta t,t_{s})}_{\pi,f_{q_{0}}}(x(t_{s}))$ to $\Phi^{(t_{s}+\delta t,t_{s})}_{f_{q_{0}}}(x^{o}(t^{-}_{s}))$ implies that for sufficiently small $\epsilon$, $\Phi^{(t_{s}+\delta t,t_{s})}_{\pi,f_{q_{0}}}(x(t_{s}))\in U^{2}_{x^{o}(t^{-}_{s})}$, hence by the continuity of the trajectory there exists a switching time, $t_{s}(\epsilon)$, such that $\Phi^{(t_{s}(\epsilon),t_{s})}_{\pi, f_{q_{0}}}(x(t_{s}))\in \mathcal{S}$, see Figure \ref{1111}.
 \begin{figure}
 \vspace{0cm}
\begin{center}
\hspace*{0cm}\includegraphics[scale=.7]{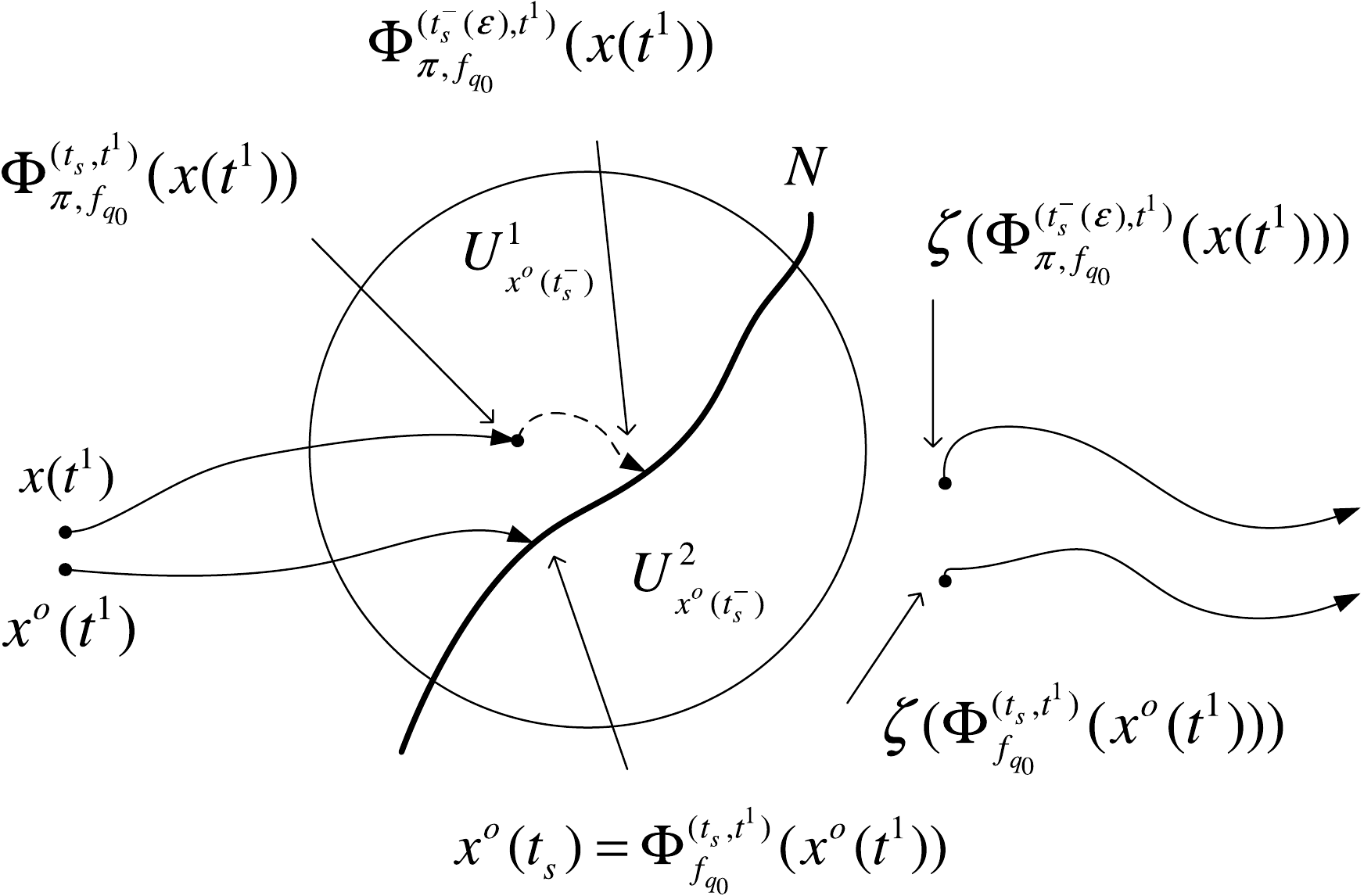}
\vspace{0cm}
      \caption{ Nominal and Perturbed State Trajectories}
      \label{1111}
      \end{center}
   \end{figure}
   \\
Furthermore by the continuity of the  state trajectory, we may choose $0\leq \epsilon$ sufficiently small that $\Phi^{(t_{s}(\epsilon),t_{s})}_{\pi, f_{q_{0}}}(x(t_{s}))\in U_{x^{o}(t^{-}_{s})}$. Let us define $\Psi:\mathds{R}^{+}\times \mathds{R}\rightarrow \mathds{R}$  by $\Psi(\epsilon,t)=x_{n}\circ\Phi^{(t,t^{1})}_{\pi, f_{q_{0}}}(x(t^{1}))$, where $x_{n}$ is the last coordinate function. Hence, the differentiability of $\Psi$ with respect to $t$ is immediate by the construction of $\Psi$ since $\frac{d\Psi(\epsilon,t)}{d t}|_{t_{s}(\epsilon)}=f^{n}_{q_{0}}(x(t_{s}(\epsilon)))$, where $f^{n}_{q_{0}}$ is the corresponding coefficient of the last coordinate of $f_{q_{0}}$. 
\\
In order to show the differentiability of $\Psi$ with respect to $\epsilon$ the following needle variation is applied
 \EQ \label{u1} u_{\pi}(t,\epsilon)=\left\{ \begin{array}{cc} \quad \hspace{-1cm}u^{o}(t) \quad \hspace{.2cm}t\leq t^{1}-\epsilon\\ \hspace{.2cm}u_{1}\quad  \hspace{.6cm}t^{1}-\epsilon\leq t\leq t^{1}\\ \hspace{-.5cm}u^{o}(t) \quad \hspace{.2cm}t^{1}< t \leq t_{s}\\u^{o}(t_{s})\quad t_{s}\leq t < t_{s}(\epsilon) \end{array}\right.,\EN

From Section 3 we recall that $\Phi_{\pi, f_{q}}^{(t, s), x}(\epsilon):=\Phi_{f_{q}^{u_{\pi}(t,\epsilon)}}^{(t, s)}(x(s))$ then one can verify that, by the results of Lemma \ref{l2} and Proposition \ref{p2}, the needle variation control $ u_{\pi}(t,\epsilon)$, given in (\ref{needle}), results in the following tangent perturbation vector at $t^{1}$, where $\epsilon \in[0,\epsilon_{0})$ for some $\epsilon_{0}>0$.
\EQ \label{12222}\frac{d}{d\epsilon}\Phi_{\pi, f_{q_{0}}}^{(t^{1},s),x}|_{\epsilon}&=&\lim_{\delta\rightarrow 0}\frac{\Phi_{\pi, f_{q_{0}}}^{(t^{1},s),x}(\epsilon+\delta)-\Phi_{\pi, f_{q_{0}}}^{(t^{1},s),x}(\epsilon)}{\delta}\nnum\\&=&T\Phi^{(t^{1},t^{1}-\epsilon)}_{\pi, f_{q_{0}}}\Big(f_{q_{0}}(x(t^{1}-\epsilon),u_{1})-f_{q_{0}}(x(t^{1}-\epsilon),u(t^{1}-\epsilon))\Big)\nnum\\&&\in T_{x(t^{1})}\mathcal{M}=T_{\Phi_{\pi, f_{q_{0}}}^{(t^{1},s),x}(\epsilon)}\mathcal{M}.\nnum\\\EN 
That implies the differentiability of $\Psi$ on $[0,\epsilon_{0})$, where (see Figure \ref{1112})
\EQ \frac{d}{d\epsilon}\Phi_{\pi, f_{q_{0}}}^{(t,t^{1}),x}|_{\epsilon}= T\Phi^{(t,t^{1})}_{ f_{q_{0}}}(\frac{d}{d\epsilon}\Phi_{\pi, f_{q_{0}}}^{(t^{1},s),x}|_{\epsilon}), \hspace{.5cm}t\in [t^{1},t^{-}_{s}(\epsilon)).\nnum\EN

The transversality hypothesis at the intersection of the state trajectory and the switching manifold implies that  $f^{n}_{q_{0}}(x(t_{s}(\epsilon)))\ne 0$; then by employing the Implicit Function Theorem (see \cite{Lee2}, Theorem 7.9) we have 
\EQ \Psi(\epsilon,t_{s}(\epsilon))=0\Rightarrow\exists \kappa: \mathds{R}\rightarrow \mathds{R}, \hspace{.5cm}s.t. \quad \kappa(\epsilon)=t_{s}(\epsilon),\nnum\EN
and $\kappa$ and $\Psi$ both are $C^{1}$; then the derivative of $\kappa(.)$ with respect to $\epsilon$ is given as
\EQ\label{kap}\frac{d\kappa(\epsilon)}{d\epsilon}=-(\frac{\partial \Psi}{\partial t})^{-1}|_{t=t_{s}(\epsilon)}.\frac{\partial \Psi}{\partial\epsilon}=-f^{n^{-1}}_{q_{0}}(x(t_{s}(\epsilon))).T^{n}\Phi^{(t_{s}(\epsilon),t^{1})}_{ f_{q_{0}}}(\frac{d}{d\epsilon}\Phi_{\pi, f_{q_{0}}}^{(t^{1},s),x}|_{\epsilon}),\nnum\\\EN
where $T^{n}\Phi^{(t_{s}(\epsilon),t^{1})}_{ f_{q_{0}}}(\frac{d}{d\epsilon}\Phi_{\pi, f_{q_{0},u_{1}}}^{(t^{1},s),x}|_{\epsilon})$ is the $n$th coordinate of $T\Phi^{(t_{s}(\epsilon),t^{1})}_{ f_{q_{0}}}(\frac{d}{d\epsilon}\Phi_{\pi, f_{q_{0}}}^{(t^{1},s),x}|_{\epsilon})$. 

This completes the proof of differentiability of $t_{s}(\epsilon)$ with respect to $\epsilon$.
 The proof for the differentiability of $t_{s}(\epsilon)$ in the case where $t_{s}(\epsilon)<t_{s}$ parallels the proof given above.
 \begin{figure}
 \begin{center}
\includegraphics[scale=.7]{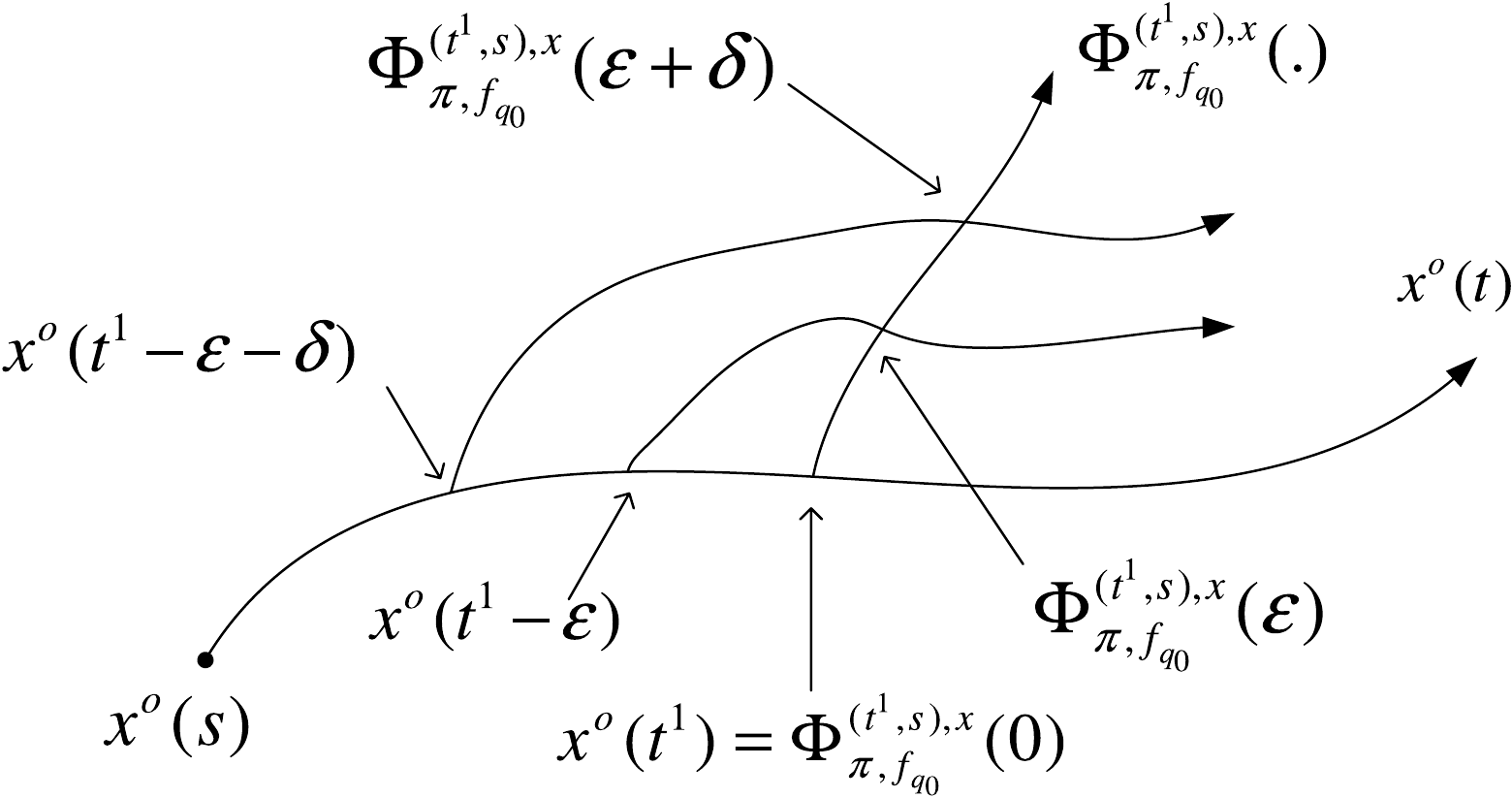}
      \caption{ Nominal and Perturbed State Trajectories}
      \label{1112}
      \end{center}
   \end{figure}
  \end{proof}
  
     \section{Proof of Lemma \ref{l555}}
\label{A}
\begin{proof}

Without loss of generality assume $t_{s}\leq t_{s}(\epsilon)$, then 

\EQ\Phi_{\pi, f_{q_{1}}}^{(t_{s}(\epsilon),t^{1}),x}(\epsilon)=\zeta\circ  \Phi_{\pi, f_{q_{0}}}^{(t^{-}_{s}(\epsilon),t^{1}),x}(\epsilon), \hspace{1cm }t^{1}\in [t_{0},t_{s}),\nnum\EN
where $\Phi_{\pi, f_{q_{0}}}^{(t^{-}_{s}(\epsilon),t^{1}),x}(\epsilon)=\Phi_{\pi, f_{q_{0}}}^{(t^{-}_{s}(\epsilon),t_{s})}\circ \Phi_{\pi, f_{q_{0}}}^{(t^{-}_{s},t^{1}),x}(\epsilon)$ and $x(t^{1})=x$, then in a local coordinate system of $x(t_{s})$ we have

\EQ &&\hspace{0cm}\zeta\circ \Phi_{\pi, f_{q_{0}}}^{(t^{-}_{s}(\epsilon),t_{s})}\circ\Phi_{\pi, f_{q_{0}}}^{(t^{-}_{s},t^{1}),x}(\epsilon) -\Phi_{f_{q_{1}}}^{(t_{s}(\epsilon),t_{s})}\circ \zeta\circ \Phi_{ f_{q_{0}}}^{(t^{-}_{s},t^{1}),x}(\epsilon)=\nnum\\&&\hspace{0cm}\Big\{\zeta\big(\int^{t_{s}(\epsilon)}_{t_{s}}f_{q_{0}}(x_{\epsilon}(t),u^{o}(t_{s}))dt+x_{\epsilon}(t_{s})\big)\Big\}-\Big\{\int^{t_{s}(\epsilon)}_{t_{s}}f_{q_{1}}(x^{o}(t),u^{o}(t))dt+\zeta(x^{o}(t^{-}_{s}))\Big\}.\nnum\EN

Since $u_{\pi}(t,\epsilon)=u^{o}(t_{s}), t\in[t_{s},t^{-}_{s}(\epsilon))$, $f_{q_{0}}(x_{\epsilon}(t),u^{o}(t_{s}))$ is differentiable with respect to $t$. Hence by the Taylor expansion of $\zeta$ around $x_{\epsilon}(t_{s})$ and the Mean Value Theorem we have
 \EQ&&\Big\{\zeta\big(\int^{t^{-}_{s}(\epsilon)}_{t_{s}}f_{q_{0}}(x_{\epsilon}(t),u^{o}(t_{s}))dt+x_{\epsilon}(t_{s})\big)\Big\}=\nnum\\&&\zeta(x_{\epsilon}(t_{s}))+(t_{s}(\epsilon)-t_{s})\times T\zeta.f_{q_{0}}(x_{\epsilon}(\hat{t}),u^{o}(t_{s}))+o(\delta x),\nnum \EN
 where $\hat{t}\in(t_{s},t^{-}_{s}(\epsilon))$. Applying the Taylor expansion of $\zeta$ around $x^{o}(t_{s})$ imples
 \EQ \zeta(x_{\epsilon}(t_{s}))-\zeta(x^{o}(t^{-}_{s}))=T\zeta (x_{\epsilon}(t_{s})-x^{o}(t^{-}_{s}))+o(\delta x),\nnum\EN
 where by the definition of the derivatives we have 
 \EQ\label{pp} \frac{d \Phi_{\pi, f_{q_{1}}}^{(t_{s}(\epsilon),t^{1}),x} }{d\epsilon}|_{\epsilon=0}&=&\lim_{\epsilon\downarrow 0}\frac{\Phi_{\pi, f_{q_{1}}}^{(t_{s}(\epsilon),t^{1}),x}-\Phi_{f_{q_{1}}}^{(t_{s}(\epsilon),t^{1}),x}}{\epsilon}\nnum\\&=&\lim_{\epsilon\downarrow 0}\frac{\zeta\circ \Phi_{\pi, f_{q_{0}}}^{(t^{-}_{s}(\epsilon),t_{s})}\circ\Phi_{\pi, f_{q_{0}}}^{(t^{-}_{s},t^{1}),x} -\Phi_{f_{q_{1}}}^{(t_{s}(\epsilon),t_{s})}\circ \zeta\circ \Phi_{ f_{q_{0}}}^{(t^{-}_{s},t^{1}),x}}{\epsilon},\EN
 therefore as $\epsilon\downarrow 0$, Lemma \ref{44} and (\ref{pp}) together yield
\EQ  \frac{d \Phi_{\pi, f_{q_{1}}}^{(t_{s}(\epsilon),t^{1}),x} }{d\epsilon}|_{\epsilon=0}&\hspace{-.1cm}=&\hspace{-.1cm}T\zeta\circ T\Phi^{(t^{-}_{s},t^{1})}_{f_{q_{0}}}\circ \frac{d \Phi_{\pi, f_{q_{0}}}^{(t^{1},t_{0}),x_{0}} }{d \epsilon}|_{\epsilon=0}\nnum\\&&+\frac{d t_{s}(\epsilon)}{d \epsilon}|_{\epsilon=0}.\big(T\zeta\big(f_{q_{0}}(x^{o}(t^{-}_{s}),u^{o}(t^{-}_{s}))\big)-f_{q_{1}}(x^{o}(t_{s}),u^{o}(t_{s}))\big).\nnum \EN
Lemma \ref{l2} and Proposition \ref{p2} complete the proof for the case $t_{s}\leq t_{s}(\epsilon)$. The same argument holds for $t_{s}(\epsilon)< t_{s} $ with a sign change for $\frac{d t_{s}(\epsilon)}{d \epsilon}|_{\epsilon=0}$.
It should be noted that the derivative in (\ref{pp}) gives the state variation at $t_{s}(\epsilon)$, therefore the nominal flow is subtracted from the perturbed one up to $t_{s}(\epsilon)$.\end{proof}

 \section{Proof of  Theorem \ref{t2}}\label{s1}
 \begin{proof}
We split the proof into the following steps:
First, the needle variation is applied at $t^{1}$, where $t_{s}<t^{1}\leq t_{f}$ and $t_{s}$ is the optimal switching time on the switching manifold from $q_{0}$ to $q_{1}$, hence there is no switching phenomena after $t^{1}$. At this stage the proof is same as  the proof presented in \cite{Agra} and  \cite{Barbero}. Second, the needle variation is applied at $t^{1}$, $t_{0}\leq t^{1}< t_{s}$.  Third, we show that the constructed adjoint variable, $\lambda$, satisfies the Hamiltonian equations and, fourth, the continuity of the Hamiltonian at the optimal switching state $x^{o}(t_{s})$ and time $t_{s}$ is obtained.

\textit{Step 1}:
Choose the following control needle variation:
\EQ u_{\pi}(t,\epsilon)=\left\{ \begin{array}{cc} \quad u_{1} \quad t^{1}-\epsilon \leq t\leq t^{1}\\ u^{o}(t)\quad\mbox{elsewhere} \end{array}\right.,\nnum  \EN
where $t_{s}< t^{1}\leq t_{f}, u_{1}\in U$.  By Lemma \ref{l2} the state variation at $t^{1}$ is \\$[f_{q_{1}}(x^{o}(t^{1}),u_{1})-f_{q_{1}}(x^{o}(t^{1}),u^{o}(t^{1}))]\in T_{x^{o}(t^{1})}\mathcal{M}$. 
By the definition of $K_{t_{f}}$ we have 
\EQ T\Phi^{(t_{f},t^{1})}_{f_{q_{1}}}([f_{q_{1}}(x^{o}(t^{1}),u_{1})-f_{q_{1}}(x^{o}(t^{1}),u^{o}(t^{1}))])\in K^{1}_{t_{f}}\subset T_{x^{o}(t_{f})}\mathcal{M}.\nnum \EN
Lemma \ref{l6} implies that
\EQ \hspace{-.8cm}0\leq\langle dh(x^{o}(t_{f})),T\Phi^{(t_{f},t^{1})}_{f_{q_{1}}}([f_{q_{1}}(x^{o}(t^{1}),u_{1})-f_{q_{1}}(x^{o}(t^{1}),u^{o}(t^{1}))])\rangle\hspace{.1cm},\nnum\EN
and by  Proposition \ref{p1} 
\EQ &&\hspace{-.8cm}0\leq  \langle dh(x^{o}(t_{f})),T\Phi^{(t_{f},t^{1})}_{f_{q_{1}}}f_{q_{1}}(x^{o}(t^{1}),u_{1})-f_{q_{1}}(x^{o}(t^{1}),u^{o}(t^{1}))\rangle\nnum\\&&\hspace{-.5cm}=\langle T^{*}\Phi^{(t_{f},t^{1})}_{f_{q_{1}}}dh(x^{o}(t_{f})),f_{q_{1}}(x^{o}(t^{1}),u_{1})-f_{q_{1}}(x^{o}(t^{1}),u^{o}(t^{1}))\rangle,\nnum\\&&\hspace{4.5cm}t_{s}< t^{1}< t_{f}.\nnum \EN
Therefore
\EQ \label{lam2}&&\hspace{-.8cm}\langle T^{*}\Phi^{(t_{f},t^{1})}_{f_{q_{1}}}dh(x^{o}(t_{f})),f_{q_{1}}(x^{o}(t^{1}),u^{o}(t^{1})\rangle\hspace{.5cm}\nnum\\&&\hspace{-.8cm}\leq \langle T^{*}\Phi^{(t_{f},t^{1})}_{f_{q_{1}}}dh(x^{o}(t_{f})),f_{q_{1}}(x^{o}(t^{1}),u_{1}))\rangle,\quad t_{s}< t^{1}< t_{f},\EN
for all $u_{1}\in U$ and setting  $p^{o}(t):=T^{*}\Phi^{(t_{f},t)}_{f_{q_{1}}}dh(x^{o}(t_{f}))$ yields a trajectory $p^{o}(.)$ satisfying  the minimization statement of the theorem. 

\textit{Step 2}:
Here we use the needle variation before the optimal switching time $t_{s}$ i.e:
\EQ u_{\pi}(t,\epsilon)=\left\{ \begin{array}{cc} \quad u_{1} \quad t^{1}-\epsilon \leq t\leq t^{1}\\ u^{o}(t)\quad\mbox{elsewhere} \end{array}\right., \nnum \EN
where $t^{1}< t_{s}, u_{1}\in U$.
Similar to the first step, the derivative of the state trajectory with respect to $\epsilon$ at $t^{1}$ is obtained as $[f_{q_{0}}(x^{o}(t^{1}),u_{1})-f_{q_{0}}(x^{o}(t^{1}),u^{o}(t^{1}))]\in T_{x^{o}(t^{1})}\mathcal{M}$ and $\frac{d}{d\epsilon}\Phi_{\pi, f_{q_{0}}}^{(t,s),x}|_{\epsilon=0}=\Psi(t), t\in[t^{1},t_{s})$.
In order to use the  method introduced in the first step, we describe the evolution  of the perturbed state, $\Phi_{\pi, f_{q}}^{(t,s),x}$,  after the switching time. Note that  each elementary control variation, $u_{\pi}(t,\epsilon)$,  results in a different switching time $t_{s}$ which depends upon both of $\epsilon$ and $u_{1}$. Now let us consider a state mapping from $x(t^{1})$ to the switching state $x(t^{-}_{s}(\epsilon))$ induced by the needle control variation; then  the state variation at the optimal switching state $x^{o}(t^{-}_{s})$ is obtained as the push forward of
\EQ  \Phi_{\pi, f_{q_{0}}}^{(t^{-}_{s}(.),t^{1}),x}:[0,\tau]\rightarrow \mathcal{S},\quad x\in \mathcal{M},\quad  x(t_{s}(\epsilon))\in \mathcal{S},\nnum\EN
where $ \Phi_{\pi, f_{q_{0}}}^{(t^{-}_{s}(\epsilon),t^{1}),x}:= \Phi_{\pi, f_{q_{0}}}^{(t^{-}_{s}(\epsilon),t^{1})}(x(t^{1}))\in \mathcal{S}$ and $t_{s}(\epsilon)$ is the switching time corresponding to the selected $\epsilon$. Here we have two possibilities, $(i)$: $t_{s}\leq  t_{s}(\epsilon)$ and $(ii)$: $t_{s}(\epsilon)< t_{s}$.
The corresponding control needle variations for these two possibilities are given as follows:
\EQ (i):t_{s}\leq  t_{s}(\epsilon),\quad u_{\pi}(t,\epsilon)=\left\{ \begin{array}{cc} \quad \hspace{-1cm}u^{o}(t) \quad \hspace{.2cm}t\leq t^{1}-\epsilon\\ \hspace{.2cm}u_{1}\quad  \hspace{.6cm}t^{1}-\epsilon\leq t\leq t^{1}\\ \hspace{-.5cm}u^{o}(t) \quad \hspace{.2cm}t^{1}< t \leq t_{s}\\u^{o}(t_{s})\quad t_{s}\leq t < t_{s}(\epsilon) \end{array}\right.,\nnum\EN
and
\EQ (ii):t_{s}(\epsilon)< t_{s},\quad u_{\pi}(t,\epsilon)=\left\{ \begin{array}{cc} \quad \hspace{-1cm}u^{o}(t) \quad \hspace{.2cm}t\leq t^{1}-\epsilon\\ \hspace{.2cm}u_{1}\quad  \hspace{.6cm}t^{1}-\epsilon\leq t\leq t^{1}\\ \hspace{-.0cm}u^{o}(t) \quad \hspace{.3cm}t^{1}< t < t_{s}(\epsilon)\\u^{o}(t_{s})\quad t_{s}(\epsilon)\leq t \leq t_{s} \end{array}\right..\nnum\EN
Notice that $u^{o}(t_{s})$ in $(i)$ corresponds to $f_{q_{0}}$ under the optimal control and in $(ii)$ corresponds to $f_{q_{1}}$ under the optimal control. 
The right differentiability of $t_{s}(\epsilon)$ with respect to $\epsilon$ at $0$ by Lemma \ref{44} (since the needle variation is defined for $0\leq\epsilon$) and Lemma \ref{l555}, in case $(i)$, together imply
\EQ\label{lllq} \frac{d \Phi_{\pi, f_{q_{0}}}^{(t^{-}_{s}(\epsilon),t^{1}),x} }{d\epsilon}|_{\epsilon=0}&\hspace{-.1cm}=&\hspace{-.1cm}\big(\frac{d t_{s}(\epsilon)}{d \epsilon}|_{\epsilon=0}\big).f_{q_{0}}(x^{o}(t^{-}_{s}),u^{o}(t^{-}_{s}))\nnum\\&&\hspace{-1cm}+T\Phi^{(t^{-}_{s},t^{1})}_{f_{q_{0}}}[f_{q_{0}}(x^{o}(t^{1}),u_{1})-f_{q_{0}}(x^{o}(t^{1}),u^{o}(t^{1}))]\in T_{x^{o}(t^{-}_{s})}\mathcal{S}\subset  T_{x^{o}(t^{-}_{s})}\mathcal{M}.\nnum\\\EN
And in case $(ii)$ we have
\\\\ 
\EQ \label{lllqq} \frac{d \Phi_{\pi, f_{q_{0}}}^{(t^{-}_{s}(\epsilon),t^{1}),x} }{d\epsilon}|_{\epsilon=0}&\hspace{-.1cm}=&\hspace{-.1cm}-\big(\frac{d t_{s}(\epsilon)}{d \epsilon}|_{\epsilon=0}\big).f_{q_{0}}(x^{o}(t^{-}_{s}),u^{o}(t^{-}_{s}))\nnum\\&&\hspace{-1cm}+T\Phi^{(t^{-}_{s},t^{1})}_{f_{q_{0}}}[f_{q_{0}}(x^{o}(t^{1}),u_{1})-f_{q_{0}}(x^{o}(t^{1}),u^{o}(t^{1}))]\in T_{x^{o}(t^{-}_{s})}\mathcal{S}\subset T_{x^{o}(t^{-}_{s})}\mathcal{M}.\nnum\\\EN
In the first case, (\ref{kir1}) and  (\ref{lllq}) together yield
\EQ \label{llas}\frac{d t_{s}(\epsilon)}{d \epsilon}|_{\epsilon=0}&\hspace{-.1cm}=&\hspace{-.1cm}-\langle dN_{x^{o}(t^{-}_{s})},f_{q_{0}}(x^{o}(t^{-}_{s}),u^{o}(t^{-}_{s}))\rangle^{-1}\nnum\\&&\hspace{-.4cm}\times\langle dN_{x^{o}(t^{-}_{s})},T\Phi^{(t_{s},t^{1})}_{f_{q_{0}}}[f_{q_{0}}(x^{o}(t^{1}),u_{1})-f_{q_{0}}(x^{o}(t^{1}),u^{o}(t^{1}))]\rangle,\nnum\\\EN

and in the second case, (\ref{kir1}) and  (\ref{lllqq}) together yield
\EQ \label{llas2}\frac{d t_{s}(\epsilon)}{d \epsilon}|_{\epsilon=0}&\hspace{-.1cm}=&\hspace{-.1cm}\langle dN_{x^{o}(t^{-}_{s})},f_{q_{0}}(x^{o}(t^{-}_{s}),u^{o}(t^{-}_{s}))\rangle^{-1}\nnum\\&&\hspace{-.4cm}\times\langle dN_{x^{o}(t^{-}_{s})},T\Phi^{(t_{s},t^{1})}_{f_{q_{0}}}[f_{q_{0}}(x^{o}(t^{1}),u_{1})-f_{q_{0}}(x^{o}(t^{1}),u^{o}(t^{1}))]\rangle,\nnum\\\EN
where due to the transversality assumption in Definition \ref{d0},\\ $\langle dN_{x^{o}(t^{-}_{s})},f_{q_{0}}(x^{o}(t^{-}_{s}),u^{o}(t^{-}_{s}))\rangle\neq 0 $.

$\big($ We notice that (\ref{llas}) coincides with (\ref{kap}) since in the coordinate system given in the proof of Lemma \ref{44}, $dN_{x^{o}(t^{-}_{s})}=(0,...,0,1)$ therefore 
\EQ&&\langle dN_{x^{o}(t^{-}_{s})},f_{q_{0}}(x^{o}(t^{-}_{s}),u^{o}(t^{-}_{s}))\rangle^{-1}\langle dN_{x^{o}(t^{-}_{s})},T\Phi^{(t_{s},t^{1})}_{f_{q_{0}}}[f_{q_{0}}(x^{o}(t^{1}),u_{1})\nnum\\&&-f_{q_{0}}(x^{o}(t^{1}),u^{o}(t^{1}))]\rangle=f^{n^{-1}}_{q_{0}}(x(t_{s})).T^{n}\Phi^{(t_{s},t^{1})}_{ f_{q_{0}}}[f_{q_{0}}(x^{o}(t^{1}),u_{1})-f_{q_{0}}(x^{o}(t^{1}),u^{o}(t^{1}))].\big)\nnum \EN

Based on (\ref{lllq}) and (\ref{lllqq}), we have \\$\frac{d \Phi_{\pi, f_{q_{0}}}^{(t^{-}_{s}(\epsilon),t^{1}),x} }{d\epsilon}|_{\epsilon=0}\in  T_{x^{o}(t^{-}_{s})}\mathcal{S}$. The variation of the state trajectory at $t_{s}$ is obtained by evaluating $T\zeta$ on \\$(\frac{d t_{s}(\epsilon)}{d \epsilon}|_{\epsilon=0}).f_{q_{0}}(x^{o}(t^{-}_{s}),u^{o}(t^{-}_{s}))+T\Phi^{(t_{s},t^{1})}_{f_{q_{0}}}[f_{q_{0}}(x^{o}(t^{1}),u_{1})-f_{q_{0}}(x^{o}(t^{1}),u^{o}(t^{1}))]$, where by definition, $ T\zeta:T\mathcal{M}\rightarrow T\mathcal{M}$
is the push forward of $\zeta$.
Therefore
\EQ \hspace{-1cm}T\zeta\Bigg((\frac{d t_{s}(\epsilon)}{d \epsilon}|_{\epsilon=0}).f_{q_{0}}(x^{o}(t^{-}_{s}),u^{o}(t^{-}_{s}))&\hspace{-0cm}+&\hspace{-0cm}T\Phi^{(t^{-}_{s},t^{1})}_{f_{q_{0}}}[f_{q_{0}}(x^{o}(t^{1}),u_{1})\nnum\\&\hspace{-0cm}-&\hspace{-0cm}f_{q_{0}}(x^{o}(t^{1}),u^{o}(t^{1}))]\Bigg)\in T_{x^{o}(t_{s})}\mathcal{M}.\nnum\EN
Parallel to the results in \cite{Shaikh}, and following Lemma \ref{l555}, in case $(i)$, the state variation at  $t_{s}$ is
\EQ\label{kkk1}\frac{d\Phi_{\pi, f_{q_{1}}}^{(t_{s},t_{s}(\epsilon))} \circ \Phi_{\pi, f_{q_{0}}}^{(t^{-}_{s}(\epsilon),t^{1})}(x(t^{1}))}{d\epsilon}|_{\epsilon=0}&\hspace{-.1cm}=&\hspace{-.1cm}T\zeta\circ T\Phi^{(t^{-}_{s},t^{1})}_{f_{q_{0}}}[f_{q_{0}}(x^{o}(t^{1}),u_{1})-f_{q_{0}}(x^{o}(t^{1}),u^{o}(t^{1}))]\nnum\\&&\hspace{-3cm}+\frac{d t_{s}(\epsilon)}{d \epsilon}|_{\epsilon=0}[T\zeta(f_{q_{0}}(x^{o}(t^{-}_{s}),u^{o}(t^{-}_{s})))-f_{q_{1}}(x^{o}(t^{-}_{s}),u^{o}(t^{-}_{s}))]\in T_{x^{o}(t_{s})}\mathcal{M},\nnum\\\EN

and in case $(ii)$ 

\EQ \label{kkk2}\frac{d \Phi_{\pi, f_{q_{1}}}^{(t_{s}(\epsilon),t_{s})} \circ \Phi_{ \pi,f_{q_{0}}}^{(t^{-}_{s},t^{1})}(x(t^{1}))}{d\epsilon}|_{\epsilon=0}&\hspace{-.1cm}=&\hspace{-.1cm}T\zeta\circ T\Phi^{(t^{-}_{s},t^{1})}_{f_{q_{0}}}[f_{q_{0}}(x^{o}(t^{1}),u_{1})-f_{q_{0}}(x^{o}(t^{1}),u^{o}(t^{1}))]\nnum\\&&\hspace{-2.5cm}+\frac{d t_{s}(\epsilon)}{d \epsilon}|_{\epsilon=0}[f_{q_{1}}(x^{o}(t_{s}),u^{o}(t_{s}))-T\zeta(f_{q_{0}}(x^{o}(t^{-}_{s}),u^{o}(t^{-}_{s})))]\in T_{x^{o}(t_{s})}\mathcal{M}.\nnum\\\EN

Due to the sign change in (\ref{llas}) and (\ref{llas2}), both of the cases $(i)$ and $(ii)$ give the same results as in (\ref{kkk2}) and (\ref{kkk1}) respectively. Henceforth, we only consider the second case.
(\ref{llas2}) and (\ref{kkk2}) together imply
\EQ  \label{kkk3}\frac{d \Phi_{\pi, f_{q_{1}}}^{(t_{s}(\epsilon),t_{s})} \circ \Phi_{\pi, f_{q_{0}}}^{(t^{-}_{s},t^{1})}(x(t^{1}))}{d\epsilon}|_{\epsilon=0}&\hspace{-.1cm}=&\hspace{-.1cm}T\zeta\circ T\Phi^{(t^{-}_{s},t^{1})}_{f_{q_{0}}}[f_{q_{0}}(x^{o}(t^{1}),u_{1})-f_{q_{0}}(x^{o}(t^{1}),u^{o}(t^{1}))]\nnum\\&&\hspace{-4cm}+\langle dN_{x^{o}(t^{-}_{s})},f_{q_{0}}(x^{o}(t^{-}_{s}),u^{o}(t^{-}_{s}))\rangle^{-1}\langle dN_{x^{o}(t^{-}_{s})},T\Phi^{(t^{-}_{s},t^{1})}_{f_{q_{0}}}[f_{q_{0}}(x^{o}(t^{1}),u_{1})\nnum\\&&\hspace{-4cm}-f_{q_{0}}(x^{o}(t^{1}),u^{o}(t^{1}))]\rangle[f_{q_{1}}(x^{o}(t_{s}),u^{o}(t_{s}))-T\zeta(f_{q_{0}}(x^{o}(t^{-}_{s}),u^{o}(t^{-}_{s})))]\in T_{x^{o}(t_{s})}\mathcal{M},\nnum\\\EN
where $T\Phi^{(t_{f},t_{s})}_{f_{q_{1}}}\Big(\frac{d \Phi_{\pi, f_{q_{1}}}^{(t_{s}(\epsilon),t_{s})} \circ \Phi_{\pi, f_{q_{0}}}^{(t^{-}_{s},t^{1})}(x(t^{1}))}{d\epsilon}|_{\epsilon=0}\Big)\in K^2_{t_{f}}$ and by Lemma \ref{l6}, we have
\EQ\label{lll22}&& \hspace{-.7cm}0\leq \langle dh(x^{o}(t_{f})),T\Phi^{(t_{f},t_{s})}_{f_{q_{1}}}\Big(\frac{d \Phi_{\pi, f_{q_{1}}}^{(t_{s}(\epsilon),t_{s})} \circ \Phi_{\pi, f_{q_{0}}}^{(t^{-}_{s},t^{1})}(x(t^{1}))}{d\epsilon}|_{\epsilon=0}\Big)\rangle , \EN
therefore
\EQ\label{lll11}&& \hspace{-.7cm}0\leq \big\langle dh(x^{o}(t_{f})),T\Phi^{(t_{f},t_{s})}_{f_{q_{1}}}\Big\{T\zeta\circ T\Phi^{(t^{-}_{s},t^{1})}_{f_{q_{0}}}[f_{q_{0}}(x^{o}(t^{1}),u_{1})-f_{q_{0}}(x^{o}(t^{1}),u^{o}(t^{1}))]\nnum\\&&\hspace{-.2cm}+\langle dN_{x^{o}(t^{-}_{s})},f_{q_{0}}(x^{o}(t^{-}_{s}),u^{o}(t^{-}_{s}))\rangle^{-1}\nnum\\&&\hspace{-.2cm}\times\langle dN_{x^{o}(t^{-}_{s})},T\Phi^{(t^{-}_{s},t^{1})}_{f_{q_{0}}}[f_{q_{0}}(x^{o}(t^{1}),u_{1})-f_{q_{0}}(x^{o}(t^{1}),u^{o}(t^{1}))]\rangle\nnum\\&&\hspace{.1cm}[f_{q_{1}}(x^{o}(t_{s}),u^{o}(t_{s}))-T\zeta(f_{q_{0}}(x^{o}(t^{-}_{s}),u^{o}(t^{-}_{s})))]\Big\}\big\rangle, \nnum\\\EN
which implies

\EQ\label{lll1}&&\hspace{-.2cm}0\leq \langle T^{*}\Phi^{(t_{f},t_{s})}_{f_{q_{1}}}dh(x^{o}(t_{f})),T\zeta\circ T\Phi^{(t^{-}_{s},t^{1})}_{f_{q_{0}}}[f_{q_{0}}(x^{o}(t^{1}),u_{1})-f_{q_{0}}(x^{o}(t^{1}),u^{o}(t^{1}))]\nnum\\&&\hspace{-0cm}+\langle dN_{x^{o}(t^{-}_{s})},f_{q_{0}}(x^{o}(t^{-}_{s}),u^{o}(t^{-}_{s}))\rangle^{-1}\times\big\{\langle dN_{x^{o}(t^{-}_{s})},T\Phi^{(t^{-}_{s},t^{1})}_{f_{q_{0}}}\nnum\\&&\hspace{.3cm}[f_{q_{0}}(x^{o}(t^{1}),u_{1})-f_{q_{0}}(x^{o}(t^{1}),u^{o}(t^{1}))]\rangle\big\}\nnum\\&&\hspace{0cm}\times[f_{q_{1}}(x^{o}(t_{s}),u^{o}(t_{s}))-T\zeta(f_{q_{0}}(x^{o}(t^{-}_{s}),u^{o}(t^{-}_{s})))]\rangle.\nnum\\ \EN
By the linearity of push-forwards (see \cite{Lee2}), (\ref{lll1}) becomes
\EQ \label{lll111}&&\hspace{-.7cm} 0\leq\langle T^{*}\Phi^{(t_{f},t_{s})}_{f_{q_{1}}}dh(x^{o}(t_{f})),T\zeta\circ T\Phi^{(t^{-}_{s},t^{1})}_{f_{q_{0}}}[f_{q_{0}}(x^{o}(t^{1}),u_{1})- f_{q_{0}}(x^{o}(t^{1}),u^{o}(t^{1}))]\rangle\nnum\\&&\hspace{0cm}+ \langle dN_{x^{o}(t^{-}_{s})},f_{q_{0}}(x^{o}(t^{-}_{s}),u^{o}(t^{-}_{s}))\rangle^{-1}\nnum\\&&\hspace{0cm}\times\langle dN_{x^{o}(t^{-}_{s})},T\Phi^{(t^{-}_{s},t^{1})}_{f_{q_{0}}}[f_{q_{0}}(x^{o}(t^{1}),u_{1})-f_{q_{0}}(x^{o}(t^{1}),u^{o}(t^{1}))]\rangle\nnum\\&&\hspace{0cm}\times\langle T^{*}\Phi^{(t_{f},t_{s})}_{f_{q_{1}}}dh(x^{o}(t_{f})),f_{q_{1}}(x^{o}(t_{s}),u^{o}(t_{s}))- T\zeta(f_{q_{0}}(x^{o}(t^{-}_{s}),u^{o}(t^{-}_{s})))\rangle,\nnum\\\EN
where one may write this as  

\EQ\label{kk22}&&\hspace{-.7cm}0\leq\langle T^{*}\Phi^{(t^{-}_{s},t^{1})}_{f_{q_{0}}}\circ T^{*}\zeta\circ T^{*}\Phi^{(t_{f},t_{s})}_{f_{q_{1}}}dh(x^{o}(t_{f})),f_{q_{0}}(x^{o}(t^{1}),u_{1})- f_{q_{0}}(x^{o}(t^{1}),u^{o}(t^{1}))\rangle\nnum\\&&\hspace{0cm}+\mu \langle T^{*}\Phi^{(t^{-}_{s},t^{1})}_{f_{q_{0}}}dN_{x^{o}(t^{-}_{s})},f_{q_{0}}(x^{o}(t^{1}),u_{1})-f_{q_{0}}(x^{o}(t^{1}),u^{o}(t^{1}))\rangle,\nnum\\\EN
where
\EQ \label{mu}\mu&\hspace{-.1cm}=&\hspace{-.1cm}\langle dh(x^{o}(t_{f})),T\Phi^{(t_{f},t_{s})}_{f_{q_{1}}}[f_{q_{1}}(x^{o}(t_{s}),u^{o}(t_{s}))-T\zeta\big(f_{q_{0}}(x^{o}(t^{-}_{s}),u^{o}(t^{-}_{s}))\big)]\rangle \nnum\\&&\hspace{-.1cm}\times \langle dN_{x^{o}(t^{-}_{s})},f_{q_{0}}(x^{o}(t^{-}_{s}),u^{o}(t^{-}_{s}))\rangle^{-1}\in \mathds{R}.\nnum\\ \EN
Applying Proposition \ref{p1} to (\ref{kk22}) on $[t^{1}, t^{-}_{s}],$ we have
\EQ&&\hspace{-.7cm}\langle T^{*}\Phi^{(t^{-}_{s},t^{1})}_{f_{q_{0}}}\circ T^{*}\zeta\circ T^{*}\Phi^{(t_{f},t_{s})}_{f_{q_{1}}}dh(x^{o}(t_{f}))\nnum\\&&\hspace{-.7cm}+\mu T^{*}\Phi^{(t^{-}_{s},t^{1})}_{f_{q_{0}}}dN_{x^{o}(t^{-}_{s})},f_{q_{0}}(x^{o}(t^{1}),u^{o}(t^{1}))\rangle \nnum\\&&\hspace{-.7cm}\leq\langle T^{*}\Phi^{(t^{-}_{s},t^{1})}_{f_{q_{0}}}\circ T^{*}\zeta\circ T^{*}\Phi^{(t_{f},t_{s})}_{f_{q_{1}}}dh(x^{o}(t_{f}))\nnum\\&&\hspace{-.7cm}+\mu T^{*}\Phi^{(t^{-}_{s},t^{1})}_{f_{q_{0}}}dN_{x^{o}(t^{-}_{s})},f_{q_{0}}(x^{o}(t^{1}),u_{1})\rangle;\nnum\EN
then, as in the first step, define
\EQ \label{lamlam}p^{o}(t):=T^{*}\Phi^{(t^{-}_{s},t)}_{f_{q_{0}}}\circ T^{*}\zeta\circ T^{*}\Phi^{(t_{f},t_{s})}_{f_{q_{1}}}dh(x^{o}(t_{f}))\nnum\\+\mu T^{*}\Phi^{(t^{-}_{s},t)}_{f_{q_{0}}} dN_{x^{o}(t^{-}_{s})},\quad t\in [t_{0},t_{s}).\EN
Since $T^{*}\Phi^{(t^{-}_{s},t_{s})}_{f_{q_{0}}}=I$, choosing $t^{1}=t_{s}$ gives
\EQ p^{o}(t^{-}_{s})=T^{*}\zeta(p^{o}(t_{s}))+\mu dN_{x^{o}(t^{-}_{s})}.\EN
Following (\ref{ham2}) in the non-hybrid case, the Hamiltonian function is defined as 
\EQ H_{q_{0}}(x^{o}(t),p^{o}(t),u^{o}(t))&\hspace{-.1cm}=&\hspace{-.1cm}\langle \big\{T^{*}\Phi^{(t^{-}_{s},t)}_{f_{q_{0}}}\circ T^{*}\zeta\circ T^{*}\Phi^{(t_{f},t_{s})}_{f_{q_{1}}}dh(x^{o}(t_{f}))\nnum\\&+&\hspace{-.1cm}\mu T^{*}\Phi^{(t^{-}_{s},t)}_{f_{q_{0}}} dN_{x^{o}(t^{-}_{s})}\big\},f_{q_{0}}(x^{o}(t),u^{o}(t))\rangle,\quad  t\in[t_{0},t_{s}).\nnum\\\EN
\\
\textit{Step 3}:
We need to show $\lambda^{o}(t)=(x^{o}(t), p^{o}(t) )=(x^{o}(t),T^{*}\Phi^{(t_{f},t)}_{f_{q_{1}}}dh(x^{o}(t_{f}))),\hspace{.1cm} t\in[t_{0},t_{s})$  and \\$\lambda^{o}(t)=\Big(x^{o}(t),T^{*}\Phi^{(t^{-}_{s},t)}_{f_{q_{0}}}\circ T^{*}\zeta\circ T^{*}\Phi^{(t_{f},t_{s})}_{f_{q_{1}}}dh(x^{o}(t_{f}))+\mu T^{*}\Phi^{(t^{-}_{s},t)}_{f_{q_{0}}} dN_{x^{o}(t^{-}_{s})}\Big),\hspace{.1cm} t\in [t_{s},t_{f}]$ satisfy (\ref{hamham}).
By the definition of  Hamiltonian functions given by (\ref{ham1}) and (\ref{ham2}), it is obvious that $\dot{x}(t)=\frac{\partial H_{i}}{\partial p},\hspace{.2cm}i=0,1$.
To prove $\dot{p^{o}}(t)=-\frac{\partial H_{i}}{\partial x},\hspace{.2cm}i=0,1$, first we use the adjoint curve expression $\lambda(t), t\in [t_{s},t_{f}]$ given by (\ref{lam2}).  Therefore we have 
\EQ \dot{p^{o}}(t)=\frac{d}{dt}T^{*}\Phi^{(t_{f},t)}_{f_{q_{1}}}dh(x^{o}(t_{f})),\nnum\EN
where together with Lemma \ref{l3} and \ref{e}  implies
\EQ \label{ee} \dot{p^{o}}(t)=\left [-(\frac{\partial f^{i}_{q_{1}}}{\partial x^{j}}p^{j})\right ]^{n}_{i,j=1}=-\frac{\partial H_{q_{1}}(x^{o}(t),p^{o}(t))}{\partial x}.\EN
Same argument holds for $p^{o}(t)=T^{*}\Phi^{(t^{-}_{s},t)}_{f_{q_{0}}}\circ T^{*}\zeta\circ T^{*}\Phi^{(t_{f},t_{s})}_{f_{q_{1}}}dh(x^{o}(t_{f}))\nnum\\+\mu T^{*}\Phi^{(t^{-}_{s},t)}_{f_{q_{0}}} dN_{x^{o}(t^{-}_{s})}, \hspace{.4cm}t\in[t_{0},t_{s})$.
\\\\
\textit{Step 4}:
Here we complete the proof by obtaining the continuity of the Hamiltonian at the optimal switching time $t_{s}$. In \cite{Shaikh}, the Hamiltonian continuity based on the control needle variation approach is derived only for controlled switching hybrid systems. We give a continuity proof  in the case of autonomous switching hybrid systems via the following algebraic steps.\\
Notice that
\EQ \label{ee2}&&\hspace{-.5cm}\langle T^{*}\Phi^{(t_{f},t_{s})}_{f_{q_{1}}}dh(x^{o}(t_{f})), [f_{q_{1}}(x^{o}(t_{s}),u^{o}(t_{s}))-T\zeta(f_{q_{0}}(x^{o}(t^{-}_{s}),u^{o}(t^{-}_{s}))]\rangle =\nnum\\&&\hspace{-.5cm}\Big\langle T^{*}\Phi^{(t_{f},t_{s})}_{f_{q_{1}}}dh(x^{o}(t_{f})),\langle dN_{x^{o}(t^{-}_{s})},f_{q_{0}}(x^{o}(t_{s}),u^{o}(t_{s}))\rangle^{-1}\times\nnum\\&&\hspace{-.5cm}\langle dN_{x^{o}(t^{-}_{s})},f_{q_{0}}(x^{o}(t_{s}),u^{o}(t_{s}))\rangle\times\nnum\\&&\hspace{-.5cm}[f_{q_{1}}(x^{o}(t_{s}),u^{o}(t_{s}))-T\zeta f_{q_{0}}(x^{o}(t^{-}_{s}),u^{o}(t^{-}_{s}))]\Big\rangle.\nnum\\\EN
Therefore by (\ref{ee2}) we have
\EQ H_{q_{1}}(x^{o}(t_{s}),p^{o}(t_{s}),u^{o}(t_{s}))&\hspace{-.1cm}=&\hspace{-.1cm} \langle p(t_{s}),f_{q_{1}}(x^{o}(t_{s}),u^{o}(t_{s}))\rangle \nnum\\&\hspace{-.1cm}=&\hspace{-.1cm}\langle T^{*}\Phi^{(t_{f},t_{s})}_{f_{q_{1}}}dh(x^{o}(t_{f})),f_{q_{1}}(x^{o}(t_{s}),u^{o}(t_{s}))\rangle\hspace{.5cm}\mbox{by \ref{lam2}}\nnum\\&\hspace{-.1cm}=&\hspace{-.1cm}\langle T^{*}\Phi^{(t_{f},t_{s})}_{f_{q_{1}}}dh(x^{o}(t_{f})),T\zeta\big(f_{q_{0}}(x^{o}(t^{-}_{s}),u^{o}(t^{-}_{s}))\big)\rangle\nnum\\&+&\langle T^{*}\Phi^{(t_{f},t_{s})}_{f_{q_{1}}}dh(x^{o}(t_{f})),\nnum\\&&\langle dN_{x^{o}(t^{-}_{s})},f_{q_{0}}(x^{o}(t_{s}),u^{o}(t_{s}))\rangle^{-1}\nnum\\&\times&[f_{q_{1}}(x^{o}(t_{s}),u^{o}(t_{s}))-T\zeta\big(f_{q_{0}}(x^{o}(t^{-}_{s}),u^{o}(t^{-}_{s}))\big)]\rangle\nnum\\&\times&\langle dN_{x^{o}(t^{-}_{s})},f_{q_{0}}(x^{o}(t_{s}),u^{o}(t_{s}))\rangle\hspace{.5cm}\mbox{by \ref{ee2}}\nnum\\&\hspace{-.1cm}=&\hspace{-.1cm}\langle T^{*}\zeta \circ T^{*}\Phi^{(t_{f},t_{s})}_{f_{q_{1}}}dh(x^{o}(t_{f})),f_{q_{0}}(x^{o}(t^{-}_{s}),u^{o}(t^{-}_{s}))\rangle \nnum\\&\hspace{-.1cm}+&\hspace{-.1cm}\mu\langle dN_{x^{o}(t^{-}_{s})},f_{q_{0}}(x^{o}(t_{s}),u^{o}(t_{s}))\rangle \hspace{.5cm}\mbox{by \ref{mu}},\EN
hence by the definition of $p$ in (\ref{lam2}) and (\ref{lamlam}) we have
\EQ \langle p(t_{s}),f_{q_{1}}(x^{o}(t_{s}),u^{o}(t_{s}))\rangle =\langle p(t^{-}_{s}),f_{q_{0}}(x^{o}(t^{-}_{s}),u^{o}(t^{-}_{s}))\rangle,\nnum\EN
which gives the continuity of the Hamiltonian at the switching time $t_{s}$. \end{proof}
\\
It should be noted that setting $\zeta=I$ above subsumes the results obtained in \cite{Shaikh} for  non-impulsive autonomous hybrid systems.

 \bibliographystyle{siam.bst}
\bibliography{HSCC}
 
\end{document}